\providecommand{\U}[1]{\protect\rule{.1in}{.1in}}
\numberwithin{equation}{section}
\newtheorem{thm}{Theorem}[section]
\newtheorem{lemma}[thm]{Lemma}
\newtheorem{prop}[thm]{Proposition}
\newtheorem{remark}[thm]{Remark}
\newtheorem{remarks}[thm]{Remarks}
\newtheorem{definition}{Definition}[section]
\newcommand{\R}{\mathbb{R}}
\newcommand{\N}{\mathbb{N}}
\newcommand{\LL}{\mathcal{L}}
\newcommand{\GG}{\mathcal{G}}
\newcommand{\EE}{\mathcal{E}}
\newcommand{\NN}{\mathcal{N}}
\begin{document}
\title[Asymptotic Behavior of KdV-KdV System]{Asymptotic behavior of Boussinesq system of KdV-KdV type}
\author[Capistrano--Filho]{R. A. Capistrano--Filho}
\address{Departamento de Matemática, Universidade Federal de Pernambuco, 50740-545, Recife-PE, Brazil.}
\email{capistranofilho@dmat.ufpe.br}
\author[Gallego]{F. A. Gallego}
\address{Departamento de Matem\'aticas y Estad\'istica, Universidad Nacional de Colombia - Sede Manizales, Colombia}
\email{fagallegor@unal.edu.co}
\subjclass[2010]{Primary: 93B05, 93D15, 35Q53}
\keywords{Boussinesq system KdV--KdV type, stabilization, feedback integral transformation, backstepping method}
\date{}

\begin{abstract}
This work deals with the local rapid exponential stabilization for a Boussinesq system of KdV-KdV type introduced by J. Bona, M. Chen and J.-C. Saut. This is a model for the motion of small amplitude long waves on the surface of an ideal fluid. Here, we will consider the Boussinesq system of KdV-KdV type posed on a finite domain, with homogeneous Dirichlet--Neumann boundary controls acting at the right end point of the interval. Our goal is to build suitable integral transformations to get a feedback control law that leads to the stabilization of the system. More precisely, we will prove that the solution of the closed-loop system decays exponentially to zero in the $L^2(0,L)$--norm and the decay rate can be tuned to be as large as desired if the initial data is small enough.
\end{abstract}

\maketitle

\section{Introduction}
\subsection{Setting of the problem}
The classical Boussinesq systems were first derived by Boussinesq in
\cite{boussinesq1}, to describe the two-way propagation of small amplitude,
long wave length gravity waves on the surface of water in a canal. These
systems and their higher-order generalizations also arise when modeling the
propagation of long-crested waves on large lakes or on the ocean and in other
contexts. Recently, in \cite{bona-chen-saut}, the authors derived a four-parameter
family of Boussinesq systems to describe the motion of small amplitude long
waves on the surface of an ideal fluid under the gravity force and in
situations where the motion is sensibly two dimensional. More precisely, they
studied a family of systems of the form%
\begin{equation}
\left\{
\begin{array}
[c]{l}%
\eta_{t}+w_{x}+(   \eta w)  _{x}+aw_{xxx}-b\eta_{xxt}=0\text{,}\\
w_{t}+\eta_{x}+ww_{x}+c\eta_{xxx}-dw_{xxt}=0\text{.}%
\end{array}
\right.  \label{int_29e_crp_1}%
\end{equation}
In (\ref{int_29e_crp_1}), $\eta$ is the elevation from the equilibrium position, and
$w=w_{\theta}$ is the horizontal velocity in the flow at height $\theta h$,
where $h$ is the undisturbed depth of the liquid. The parameters $a$, $b$,
$c$, $d$, that one might choose in a given modeling situation, are required to
fulfill the relations%
\begin{equation}
a+b=\frac{1}{2}\left(   \theta^{2}-\frac{1}{3}\right)  \text{, \ \ \ }%
c+d=\frac{1}{2}(   1-\theta^{2})  \geq0 \text{, \ \ \ }\theta
\in\left[  0,1\right], \label{int_30e_crp_1}%
\end{equation}
where $\theta\in\left[  0,1\right]  $ specifies which horizontal velocity the
variable $w$ represents (cf. \cite{bona-chen-saut}).
Consequently,
\[
a+b+c+d=\frac{1}{3}.
\]
As it has been proved in \cite{bona-chen-saut}, the initial value problem for the linear system associated with
\eqref{int_29e_crp_1} is well-posed on $\mathbb R$ if either $C_1$ or $C_2$ is satisfied, where
\begin{eqnarray*}
(C_1)&& b,d\ge 0,\ a\le 0,\ c\le 0;\\
(C_2)&& b,d\ge 0, \ a=c>0. 
\end{eqnarray*}
When $b=d=0$ and $(C_2)$ is satisfied, then necessarily $a=c=1/6$. Nevertheless, the scaling $x\to x/\sqrt{6}$, $t\to t/\sqrt{6}$ gives an system equivalent to \eqref{int_29e_crp_1} for which $a=c=1$, namely
\begin{equation}
\label{b1}
\begin{cases}
\eta_t + w_x+w_{xxx}+  (\eta w)_x= 0, & \text{in} \,\, (0,L)\times (0,+\infty) ,\\
w_t +\eta_x +\eta_{xxx} +ww_x=0,  & \text{in} \,\, (0,L)\times (0,+\infty), \\
\eta(x,0)= \eta_0(x), \quad w(x,0)=  w_0(x), & \text{in} \,\, (0,L),
\end{cases}
\end{equation}
which is the so-called \textit{Boussinesq system of Korteweg-de Vries---Korteweg-de Vries type}. 

Therefore, the interest of this work is to give a positive answer for the following stabilization problem:
\vglue 0.2 cm
\noindent
\textit{{\bf Problem $\mathcal{A}$:}
Can one find a linear feedback control law $$(f(t),g(t)) = F[(\eta( \cdot,t),w(\cdot,t))], \quad t \in (0,\infty),$$
such that the closed-loop system \eqref{b1} with boundary condition
\begin{equation}\label{b1.1}
\begin{cases}
\eta(0,t)=0,\,\,\eta(L,t)=0,\,\,\eta_{x}(0,t)=f(t),&t \in (0,\infty), \\
w(0,t)=0,\,\,w(L,t)=0,\,\,w_{x}(L,t)=g(t),& t \in (0,\infty)
\end{cases}
\end{equation}
 is exponentially stable?}
\vglue 0.1 cm

As we know, there are some natural methods that may give us a positive answers of the Problem $\mathcal{A}$, e.g., the so-called “Gramian approach” (see \cite{komornik1997,slemrod,urquiza2005} and the reference therein for more details), the Lyapunov function method (see, for instance, \cite{coron2007}), and, finally, the backstepping method, that is now a standard method for finite dimensional control systems (see, e.g., \cite{coron2007,krstic1995,krstic2008,Sontag}). The first adaptations of the backstepping method to control systems modeled by partial differential equations were given in \cite{coron1998} and \cite{liu2000},  by using a Volterra transformation \cite{boskovi2001}.


\subsection{State of art} 
In this paper, we will try to apply the backstepping method (see \cite{krstic2008} for a systematic introduction of this method) to design the feedback control law. This method was successfully applied by Coron \textit{et. al.} in \cite{coron2014,coron2015} to study the rapid stabilization of the Korteweg–de Vries (KdV) and the Kuramoto–Sivashinsky (K--S) equations, respectively.

More precisely, which concerning of KdV equation, in \cite{coron2014}, the authors studied the KdV equation on a bounded domain $(0,L)$
\begin{equation}
\left\{
\begin{array}
[c]{lll}%
u_{t}+uu_x+u_{x}+u_{xxx}=0 &  & \text{in }(0,L)\times (0,+\infty)  \text{,}\\
u(0,t)=u(L,t)=0 &  & \text{on }( 0,+\infty),\\
u_x(L,t)=h(t) & & \text{on } ( 0,+\infty),
\end{array}
\right.  \label{kdv_1}%
\end{equation}
where the function $h(t)=F_{\lambda}(u(t))$ is the feedback law designed to ensure the exponential stability of the system with a decay rate equal to $\lambda$.  This decay rate can be chosen as large as desired, which is called a rapid stabilization result. They consider the following problem:
\vglue 0.2 cm
\noindent
\textit{{\bf Problem $\mathcal{B}$:} Let $\lambda>0$. Does there exist a linear feedback control $F_{\lambda}:L^2(0,L)\to\mathbb{R}$ such that, for some $\delta>0$, every solution $u$ of \eqref{kdv_1} with $h(t) = F_{\lambda}(u(\cdot,t))$ satisfies $$||u(\cdot,t)||_{L^2(0,L)}\leq Ce^{-\lambda t}||u(\cdot,0)||_{L^2(0,L)},$$
for some $C>0$, provided that $||u(\cdot,0)||_{L^2(0,L)}\leq\delta$?}
\vglue 0.1 cm
A positive answer to this question was given in \cite{coron2014} (see also \cite{coron2015} for K--S equation) using a \textit{modified backstepping method} and the following result was obtained.

\vglue 0.2 cm
\noindent
\textit{{\bf Theorem A }(Coron \textit{et al.} \cite{coron2014}) For every $\lambda>0$, there exist a continuous linear feedback control law $F:L^2(0,L)\to\mathbb{R}$, and positive constants  $r>0$ and $C>0$ such that, for every $u^0\in L^2(0,L)$ satisfying $||u^0||_{L^2(0,L)}\leq r$, the solution $v$ of \eqref{kdv_1}, with $h(t):=F(u(\cdot,t))$ satisfying the initial condition $u(\cdot,0)=u^0(\cdot)$, is defined on $[0,+\infty)$ and satisfies $$||u(\cdot,t)||_{L^2(0,L)}\leq Ce^{-\frac{\lambda}{2}t}||u(\cdot,0)||_{L^2(0,L)}, \quad \text{ for every } \quad t\geq0.$$}

The main difficulty to establish Theorem A is the fact that the linear system is known to be non-controllable (and consequently non-stabilizable) if the length of the interval $L$ belongs to a set of critical values $\NN$ (see for instance \cite{rosier}). The authors looked for an integral transform
\begin{equation}\label{vt}
w(x,t)= u(x,t) - \int_0^Lk(x,y)u(y,t)dy
\end{equation}
with $k=k(x,y)$ chosen such that $u=u(x,t)$ is a solution of \eqref{kdv_1} if and only if $w=w(x,t)$ is a solution of 
\begin{equation}\label{estintro}
\begin{cases}
w_{t}+w_x+w_{xxx}+\lambda w=-uu_x-\frac{1}{2}\int_0^L k_y(x,y)u^2(y,t)dy, \\
w(0,t)=w(L,t)=w_x(L,t)=0.
\end{cases}
\end{equation}
Thus, they proved that the system \eqref{estintro} is locally exponentially stable with a decay rate equal to $\lambda$. This result follows for \eqref{kdv_1} if the integral transform \eqref{vt} exists and is invertible and if the kernel function $k=k(x,y)$ satisfies a partial differential equation with a Dirac measure as a source term. Is important to see that this result holds if the length $L$ is not critical.

\vglue 0.3cm

Now, we will come back to the stabilization properties of the system \eqref{b1} on a bounded domain. As far we know, the KdV--KdV system is expected to admit global
solutions on $\mathbb{R}$, and it also possesses good control properties on
the torus \cite{micu2}. However, there are few results concerning to the bounded domains. The unique result in the literature is due to Pazoto and Rosier in \cite{pazoto2008}. They investigated the asymptotic behavior of the solutions of the system \eqref{b1} satisfying the boundary conditions%
\begin{equation}
\left\{
\begin{array}
[c]{lll}%
w(0,t)  =w_{xx}(0,t)  =0\text{ } &  & \text{on $(0,T)$,}\\
w_{x}(0,t)  =\alpha_{0}\eta_{x}(0,t)  &  & \text{on $(0,T)$,}\\
w(L,t)  =\alpha_{2}\eta(L,t)  &  & \text{on $(0,T)$,}\\
w_{x}(L,t)  =-\alpha_{1}\eta_{x}(L,t)  &  & \text{on $(0,T)$,}\\
w_{xx}(L,t)  =-\alpha_{2}\eta_{xx}(L,t)  &  &\text{on $(0,T)$}%
\end{array}
\right.  \label{int_32e_crp_1}%
\end{equation}
and initial conditions%
\begin{equation}%
\begin{array}
[c]{lll}%
\eta(x,0) =\eta_{0}(x)  \text{, \ }w(x,0)  =w_{0}(   x)  &  & \text{on }(   0,L)
\text{.}%
\end{array}
\label{int_33e_crp_1}%
\end{equation}
In (\ref{int_32e_crp_1}), $\alpha_{0}$, $\alpha_{1}$ and $\alpha_{2}$ denote some
nonnegative real constants.

Under the above boundary conditions, they observed that the derivative of the energy associated to the system (\ref{b1}), with boundary
conditions (\ref{int_32e_crp_1})-(\ref{int_33e_crp_1}), satisfies%
\[
\frac{dE}{dt}    =-\alpha_{2}\left\vert \eta(   L,t)  \right\vert
^{2}-\alpha_{1}\left\vert \eta_{x}(   L,t)  \right\vert ^{2}%
-\alpha_{0}\left\vert \eta_{x}(   0,t)  \right\vert ^{2}
  -\frac{1}{3}w^{3}(   L,t)  -\int_{0}^{L}(   \eta w)
_{x}\eta dx,
\]
where%
\[
E(   t)  =\frac{1}{2}\int_{0}^{L}(   \eta^{2}+w^{2})
dx\text{.}%
\]

This indicates that the boundary conditions play the role of a feedback
damping mechanism, at least for the linearized system. Therefore, the
following questions arise:

\vglue 0.2 cm
\noindent
\textit{{\bf Problem $\mathcal{C}$:} Does $E(t)\rightarrow0$, as $t\rightarrow+\infty?$  If it is the case, can we give the decay rate?}
\vglue 0.1 cm

The problem might be easy to solve when the underlying model has a intrinsic
dissipative nature. Moreover, in the context of coupled systems, in order to
achieve the desired decay property, the damping mechanism has to be designed
in an appropriate way in order to capture all the components of the system.
The main result of \cite{pazoto2008} provides a positive answer to those questions.

\vglue 0.2 cm
\noindent
\textit{{\bf Theorem B }(Pazoto \textit{et al.} \cite{pazoto2008}) 
Assume that $\alpha_{0}\geq0$, $\alpha_{1}>0$ and
$\alpha_{2}=1$. Then there exist some numbers $\rho>0$, $C>0$ and $\mu>0$ such
that for any $(   \eta_{0},w_{0})  \in(   L^{2}(   I)
)  ^{2}$ with%
\[
\left\Vert (   \eta_{0},w_{0})  \right\Vert _{(   L^{2}( 
I)  )  ^{2}}\leq\rho\text{,}%
\]
the system (\ref{b1})-(\ref{int_33e_crp_1}) admits a unique solution%
\[
(   \eta,w)  \in C(   \mathbb{R}^{+};(   L^{2}( 
I)  )  ^{2})  \cap C(   \mathbb{R}^{+\ast};( 
H^{1}(   I)  )  ^{2})  \cap L^{2}( 
0,1  ;(   H^{1}(   I)  )  ^{2})  \text{,}%
\]
which fulfills%
\[
\left\Vert (   \eta,w)  (   t)  \right\Vert _{( 
L^{2}(   I)  )  ^{2}}\leq Ce^{-\mu t}\left\Vert ( 
\eta_{0},w_{0})  \right\Vert _{(   L^{2}(   I)  )
^{2}}\text{, }\forall t\geq0\text{,}%
\]%
\[
\left\Vert (   \eta,w)  (   t)  \right\Vert _{( 
H^{1}(   I)  )  ^{2}}\leq C\frac{e^{-\mu t}}{\sqrt{t}%
}\left\Vert (   \eta_{0},w_{0})  \right\Vert _{(   L^{2}( 
I)  )  ^{2}}\text{, }\forall t>0\text{.}%
\]}

In our case, we propose to use the ideas contained in \cite{coron2014} to obtain a positive answer for the Problem $\mathcal{A}$. However, first, we need to know a answer for the the following exact controllability problem.

\vglue 0.2 cm
\noindent
\textit{{\bf Problem $\mathcal{D}$:} Given $T>0$ and $(\eta_0,w_0)$, $(\eta_T,w_T)$ in certain space, can one find appropriate $f(t)$ and $g(t)$, in a certain space, such that the corresponding solution $(\eta,w)$ of the linearized system
\begin{equation}
\left\{%
\begin{array}
[c]{lll}%
\eta_{t}+w_{x}+ w_{xxx}=0 &  & \text{in }( 
0,L)  \times(   0,T)  \text{,}\\
w_{t}+\eta_{x} + \eta_{xxx}=0 &  & \text{in }(   0,L)
\times(   0,T)  \text{,}%
\end{array}
\right.  \label{int_34e}%
\end{equation}
with the boundary conditions
\begin{equation}
\left\{%
\begin{array}
[c]{lll}%
\eta(0,t) =\eta(L,t)  =0\text{, } \eta_{x}(0,t)  =f(t)  &  & \text{on }(   0,T)
\text{,}\\
w(0,t)  =w(L,t)
=0 \text{, }w_{x}( 
L,t)  =g(   t)  &  & \text{on }(   0,T)
\end{array}
\right.  \label{int_35e}%
\end{equation}
and the initial conditions
\begin{equation}%
\begin{array}
[c]{lll}%
\eta( x,0)  =\eta_{0}(   x)  \text{, \ }w( 
x,0)  =w_{0}(   x)  &  & \text{in }(   0,L)
\text{.}%
\end{array}
\label{int_36e}
\end{equation}
satisfies $\eta(\cdot,  T)  =\eta_{T}$ and $w(\cdot,T)
=w_{T}$?}
\vglue 0.1 cm

More recently, in \cite{capistrano2016} (see also \cite{capis_thesis}), the exact boundary controllability of the linear system Boussinesq of KdV--KdV type was studied. It was discovered that whether the associated linear system is exactly controllable or not depends on the length of the spatial domain. In the context of equations that possess a hyperbolic structure, recent results deals with systems of two wave-type equations, only one of them being directly damped. More precisely, the following result was obtained for the system \eqref{int_34e}-\eqref{int_36e}.


\vglue 0.2 cm
\noindent
\textit{{\bf Theorem C }(Capistrano--Filho \textit{et al.} \cite{capistrano2016}) Let 
\begin{equation}
\mathcal{N}:=\left\{  \frac{2\pi}{\sqrt{3}}\sqrt{k^{2}+kl+l^{2}}%
\,:k,\,l\,\in\mathbb{N}^{\ast}\right\}. \label{criticalrapid}%
\end{equation}
For any $T>0$, $L\in(   0,+\infty)  \backslash\mathcal{N}$,
$(   \eta_{0},w_{0}) \in  [H^{-1}(0,L)]^{2}$ and $(\eta_{T},w_{T})  \in  [H^{-1}(0,L)]^{2}$, 
 there exist controls $(f(t),g(t))\in [L^{2}(0,T)]^2$ such that the solution $(   \eta,w)  \in C^0([0,T],[H^{-1}(0,L)]^2)$ of \eqref{int_34e}-\eqref{int_36e}, 
satisfies $\eta(\cdot,T)  =\eta_{T}$ and $w(\cdot,T)=w_{T}$.}

\vglue 0.1 cm
As in the case of the KdV equation \cite[Lemma 3.5]{rosier} when $L\in\mathcal{N}$, the linear system \eqref{int_34e}-\eqref{int_36e} is not controllable\footnote{There exists a finite-dimensional subspace of $L^2(0,L)$, denoted by $\mathcal{M}=\mathcal{M}(L)$, which is unreachable from $0$ for the linear system.}. To prove Theorem C, the authors used the classical duality approach based upon the Hilbert Uniqueness Method (H.U.M.)  due to J.-L. Lions \cite{lions}, which reduces the exact controllability of the system to some observability inequality to be proved for the adjoint system. Then, to establish the required observability inequality, was used the compactness-uniqueness argument due to J.-L. Lions \cite{lions1} and some multipliers, which reduces the analysis to study a spectral problem. The spectral problem is finally solved by using a method introduced in \cite{rosier}, based  on the use of complex analysis, namely, the Paley-Wiener theorem.


\subsection{Main result and notations}
With all of these results in hands, we are now in position to present our main result. In this paper, considerations about the local rapid stabilization of the following system
\begin{equation}
\label{n1'''}
\begin{cases}
\eta_t + w_x+w_{xxx}+(\eta w)_x= 0, & \text{in} \,\, (0,L)\times (0,+\infty),\\
w_t +\eta_x +\eta_{xxx} +ww_x=0,  & \text{in} \,\, (0,L)\times (0,+\infty), \\
\eta(x,0)= \eta_0(x), \quad w(x,0)=  w_0(x), & \text{in} \,\, (0,L),
\end{cases}
\end{equation}
with the boundary conditions
\begin{equation}\label{n1.1''}
\begin{cases}
\eta(0,t)=0,\,\,\eta(L,t)=0,\,\,\eta_{x}(0,t)=f(t)& \text{on} \,\, (0,T),\\
w(0,t)=0,\,\,w(L,t)=0,\,\, w_{x}(L,t)=g(t)& \text{on} \,\, (0,T)
\end{cases}
\end{equation}
are given. However, before to present our main result, we remark that some restriction on the length $L$, of the domain are needed. In this paper, unless otherwise specified, we always keep the assumption that $L\in(   0,+\infty)  \backslash\mathcal{N}$, where $\mathcal{N}$ is defined by \eqref{criticalrapid}. 

Let us define the following Hilbert space $$X_s := [H^s(0,L)]^2, \quad \text{for}\quad  s\in \R$$ and 
\begin{align*}
&\overline{X}_0 := X_0, \\
&\overline{X}_3 := \left\lbrace (\eta, w) \in [H^3(0,L)\cap H^1_0(0,L)]^2: \eta_x(0)=w_x(L)=0\right\rbrace, \\
&\overline{X}_{3\theta} := [\overline{X}_0,\overline{X}_3]_{[\theta]}, \quad \text{for $0<\theta <1$,}
\end{align*}
where $[X_0,D(A)]_{[\theta]}$ denote the Banach space obtained by the complex interpolation method (see e.g. \cite{bergh}). It is easily seen that
\begin{align*}
\overline{X}_1 &:= H_0^1(0,L)\times H^1_0(0,L), \\
\overline{X}_2 &:= \left\lbrace (\eta,w)\in [H ^2(0,L)\cap H^1_0(0,L) ]^2: \eta_x(0)=w_x(L)=0\right\rbrace.
\end{align*}
In addition, the space $X_{-s}=(X_s)'$ is defined as the dual of $X_s$ with respect to the pivot space. We also introduce the following integrals transforms $K$ and $S$ defined in $L^2(0,L)$ given by
\begin{equation}\label{deftransf'}
(Kv)(x):=\int_0^L k(x,y)v(y)dy \quad \text{and} \quad (Sv)(x):=\int_0^L s(x,y)v(y)dy, \quad \text{for all $v \in L^2(0,L)$,}
\end{equation}
where $(k,s)$ is the solution of stationary problem
\begin{equation}\label{esta1'}
\begin{cases}
k_{yyy}+k_y+k_{xxx}+k_x+\lambda s=0, & \text{in $(0,L)\times (0,L)$}, \\
s_{yyy}+s_y+s_{xxx}+s_x+\lambda k=\lambda \delta(x-y), & \text{in $(0,L)\times (0,L)$},
\end{cases}
\end{equation}
with boundary condition 
\begin{equation}\label{esta1.1'}
\left\lbrace\begin{array}{l l }
k(x,0)=k(x,L)=k(0,y)=k(L,y)=k_y(x,0)=k_y(x,L)=0, &   \text{on $(0,L)$}, \\
s(x,0)=s(x,L)=s(0,y)=s(L,y)=s_y(x,0)=s_y(x,L)=0, &   \text{on $(0,L)$}, 
\end{array}\right.
\end{equation}
where $\lambda \in \R\setminus \{0\}$ and $\delta(x-y)$ denotes the Dirac measure on the diagonal of the square $[0,L]\times [0,L]$. The definition of a solution to \eqref{esta1'}-\eqref{esta1.1'} is given in Section \ref{sec21}. With this system in hands, we are able to prove the following assertions:
\begin{itemize}
\item[a.] System \eqref{esta1'}-\eqref{esta1.1'} has a unique solution;
\item[b.] The operators $(I- (K+S))^{-1}$ and $(I- (K-S))^{-1}$ exist and it is a continuous operator in $L^2(0,L)$, that is, $(I- (K+S))^{-1}$ and $(I- (K-S))^{-1}$ belongs to $\mathcal{L}(L^2(0,L);\mathbb{R})$;
\item[c.] Assuming that $L\in(   0,+\infty)  \backslash\mathcal{N}$, if we define the feedback law $F(\cdot)=(F_1(\cdot),F_2(\cdot))$ by $$f(t):=F_1(\eta(\cdot, t),w(\cdot, t)) =\int_0^L \left[ k_x(0,y)\eta(y,t)+s_x(0,y)w(y,t)\right]dy \text{ on } (0,T)$$ and  $$g(t):=F_2(\eta(\cdot,t),w(\cdot,t)) = \int_0^L \left[ k_x(L,y)w(y,t)+s_x(L,y)\eta(y,t)\right]dy  \text{ on } (0,T)$$
then, for the solution $(\eta,w)\in C(\R^+;\overline{X}_2)$ of \eqref{n1'''}-\eqref{n1.1''}, one has $$||(\eta(t),w(t))||_{X_0}\leq C\left(||(I-K)\eta(t)-Sw(t)||_{L^2(0,L)}+||(I-K)w(t)-S\eta(t)||_{L^2(0,L)}\right),$$
for some $C:=C(K,S)>0$ depending on the operators $K$ and $S$.
\end{itemize}
\smallskip

Thus, in order to prove the local exponential stability of \eqref{n1'''}-\eqref{n1.1''} and since the \textit{critical set} of the Boussinesq system of KdV--KdV type is defined by \eqref{criticalrapid}, the above statements are the key to prove the main result of this paper, which can be stated as follows.
\begin{thm}\label{main_int}
 Let $T>0$ and $L\in(   0,+\infty)  \backslash\mathcal{N}$.  For every $\lambda>0$, there exist a continuous linear feedback control law $$F:=(F_1,F_2):L^2(0,L)\times L^2(0,L)\to\mathbb{R}\times\mathbb{R},$$ and positive constants $\rho\in(0,+\infty)$ and $C:=C(K,S)>0$, depending on the operators $K$ and $S$, defined by \eqref{deftransf'}, such that, for every $(\eta_0,w_0)\in \overline{X}_2$ with $$\|(\eta_0,w_0)\|_{\overline{X}_2}<\rho,$$ the solution $(\eta,w)$ of \eqref{n1'''}-\eqref{n1.1''} belongs to space $C([0,T]; \overline{X}_2)$ and satisfies
\begin{equation*}
\|(\eta(t),w(t))\|_{X_0} \leq C e^{-\frac{\lambda}{2} t} \|(\eta_0,w_0)\|_{X_0}, \quad  0\leq  t \leq T. 
\end{equation*}
\end{thm}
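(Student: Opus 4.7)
The plan is to adapt the modified backstepping approach of Coron--Lü \cite{coron2014,coron2015} to the coupled KdV--KdV system, using the \emph{two} kernels $(k,s)$ that naturally appear because the two equations are coupled through their dispersive terms. Define the transformation $\Pi : X_0 \to X_0$ by
\begin{equation*}
\Pi(\eta,w) := \bigl((I-K)\eta - Sw,\ (I-K)w - S\eta\bigr).
\end{equation*}
Items (a)--(c) in the introduction supply everything we need: $\Pi$ is well defined (a), invertible on $X_0$ (b), and quantitatively comparable to the identity in $X_0$ (c). Setting $(\tilde\eta,\tilde w) := \Pi(\eta,w)$, the objective is to show that if $(\eta,w)$ solves the closed-loop system \eqref{n1'''}--\eqref{n1.1''} with the feedback $(F_1,F_2)$, then $(\tilde\eta,\tilde w)$ solves a dissipative ``target'' KdV--KdV system with built-in damping at rate $\lambda$; then transfer the decay back via $\Pi^{-1}$.

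\textbf{Derivation of the target system.} I would compute $\partial_t\tilde\eta=(I-K)\eta_t - Sw_t$, substitute $\eta_t,w_t$ from \eqref{n1'''}, and integrate by parts three times in $y$ to move $\partial_y$ from $\eta,w$ onto $k,s$. Using the homogeneous traces $\eta(0,t)=\eta(L,t)=w(0,t)=w(L,t)=0$ together with the boundary conditions \eqref{esta1.1'} for the kernels, almost every boundary contribution vanishes; the two surviving terms are exactly $k(x,0)\eta_x(0,t)+s(x,0)w_x(L,t)$-type expressions whose elimination is encoded in the choice of $f(t)=F_1(\eta,w)$ and $g(t)=F_2(\eta,w)$. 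The kernel PDE \eqref{esta1'} then converts the bulk integral into $-\tilde w_x-\tilde w_{xxx}-\lambda\tilde\eta$, and the symmetric manipulation of $\tilde w$ gives the paired equation. Checking boundary conditions, $\Pi$ sends the six homogeneous conditions on $(\eta,w)$ and $(k,s)$ into the six conditions
\begin{equation*}
\tilde\eta(0,t)=\tilde\eta(L,t)=\tilde w(0,t)=\tilde w(L,t)=\tilde\eta_x(0,t)=\tilde w_x(L,t)=0.
\end{equation*}
The net result is that $(\tilde\eta,\tilde w)$ satisfies a target problem of the same form as \eqref{n1'''}--\eqref{n1.1''} but with an extra diagonal damping $\lambda(\tilde\eta,\tilde w)$, homogeneous controls, and nonlinear forcing $(N_1,N_2)$ obtained by applying $(I-K,-S)$ and $(-S,I-K)$ to $((\eta w)_x, w w_x)$.

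\textbf{Energy estimate for the target and linear decay.} Multiplying the first target equation by $\tilde\eta$ and the second by $\tilde w$, integrating on $(0,L)$ and adding, the cross term $\int(\tilde\eta\tilde w_x+\tilde w\tilde\eta_x)\,dx=[\tilde\eta\tilde w]_0^L$ vanishes, while the third-derivative contribution reduces after three integrations by parts to $[\tilde\eta\tilde w_{xx}+\tilde w\tilde\eta_{xx}]_0^L-[\tilde\eta_x\tilde w_x]_0^L$, which also vanishes thanks to the six conditions above (the first bracket by the Dirichlet traces, the second because $\tilde\eta_x(0)=\tilde w_x(L)=0$). One is left with
\begin{equation*}
\tfrac{1}{2}\tfrac{d}{dt}\|(\tilde\eta,\tilde w)\|_{X_0}^2 + \lambda\|(\tilde\eta,\tilde w)\|_{X_0}^2 = \int_0^L(\tilde\eta N_1+\tilde w N_2)\,dx.
\end{equation*}
In the linear case $N_1=N_2=0$, Gr\"onwall gives $\|(\tilde\eta(t),\tilde w(t))\|_{X_0}\le e^{-\lambda t}\|(\tilde\eta_0,\tilde w_0)\|_{X_0}$; inserting item (c) and the $L^2$-boundedness of $K,S$ transfers this to $\|(\eta(t),w(t))\|_{X_0}\le C e^{-\lambda t}\|(\eta_0,w_0)\|_{X_0}$.

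\textbf{Nonlinear closure and main obstacle.} For the nonlinear problem, I would first establish local well-posedness of \eqref{n1'''}--\eqref{n1.1''} in $C([0,T];\overline X_2)$ by a Banach fixed-point argument on the closed-loop linear semigroup (which is well defined because $F_1,F_2$ are continuous and $L\notin\NN$), exploiting the usual hidden regularity/trace estimates for KdV-type boundary-value problems to control $\eta_x,w_x$ at the endpoints. In the energy inequality above, Sobolev embedding $H^1\hookrightarrow L^\infty$ on $[0,L]$ together with the $H^1$-boundedness of $K,S$ bounds $|\int(\tilde\eta N_1+\tilde w N_2)| \le C\|(\eta,w)\|_{\overline X_2}\|(\tilde\eta,\tilde w)\|_{X_0}^2$, so that for initial data with $\|(\eta_0,w_0)\|_{\overline X_2}<\rho$ small one obtains
\begin{equation*}
\tfrac{d}{dt}\|(\tilde\eta,\tilde w)\|_{X_0}^2 \le -(2\lambda-C\rho)\|(\tilde\eta,\tilde w)\|_{X_0}^2,
\end{equation*}
whence $\lambda/2$-decay in $X_0$ after transferring back through $\Pi^{-1}$. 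The main obstacle I expect is the bookkeeping in the derivation of the target system: the two coupled third-order terms $\eta_{xxx},w_{xxx}$ interact with \emph{both} kernels through the cross-coupling $\lambda s,\lambda k$ in \eqref{esta1'}, so one must carefully identify which boundary traces cancel against $F_1$ and which against $F_2$. A secondary difficulty, requiring the $\overline X_2$-regularity, is maintaining uniform-in-time bounds on $\|(\eta,w)\|_{\overline X_2}$ along the nonlinear trajectory so that the perturbation term $C\rho$ in the exponential rate does not degrade over time; this is handled by a standard bootstrap that uses the $X_0$-decay to close a higher-regularity estimate on compact time windows.
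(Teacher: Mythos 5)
Your proposal follows essentially the same route as the paper: the same transformation $(u,v)=((I-K)\eta-Sw,\,(I-K)w-S\eta)$, the same target system with diagonal damping $\lambda$ and nonlinear forcing given by the transform of $(-(\eta w)_x,-ww_x)$, the same $X_0$-energy estimate with vanishing boundary terms, the same use of the invertibility of $I-(K\pm S)$ to transfer the decay back, and the same smallness condition in $\overline{X}_2$ to absorb the nonlinear terms at rate $\lambda/2$. No substantive differences; the argument is correct as outlined.
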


\smallskip

\begin{remarks}
The following remarks are now in order.
\smallskip

\begin{itemize}
\item[(i.)] In general to propose some stabilization result using standard methods is necessary that the derivative of the energy be negative, see for instance \cite{pazoto2008}. However, using this approach, namely, \textit{backstepping method}, the mechanism of damping does not give us any signal of the energy even for the linear system.
\smallskip

\item[(ii.)] \textit{Backstepping method} is interesting due of the fact that we can deal directly with nonlinear problem instead of first to prove a result for the linear problem and after, using the fixed point argument, to extend for the nonlinear one.
\smallskip

\item[(iii.)] In this work, we can not able to deal with the problem with only one control acting on the boundary conditions, that is, $f(t)=0$ or $g(t)=0$ in \eqref{n1.1''}.  Its looks a interesting problem and we detail the difficulties at the end of this work (see Section \ref{FC}).
\end{itemize}
\end{remarks}

\smallskip
Thus, the plan of the present paper is as follows.

\smallskip
-- Section \ref{lc} is devoted to study the well-posedness of the system \eqref{b1}. 

\smallskip
-- In the Section \ref{st_p}, we will prove the existence and uniqueness of solution of the stationary problem \eqref{esta1'}-\eqref{esta1.1'}. 

\smallskip
-- The proof of our main result, Theorem \ref{main_int}, is made in the Section \ref{rs_p}. 

\smallskip
-- Section \ref{FC} is devoted to some remarks and related problems.


\section{Well-posedness}\label{lc}
In this section, we explain what we mean by a solution of \eqref{b1} and we prove that the non-homogeneous linear system is well-posed.
\subsection{Well-posedness:  Linear homogeneous system} 
The result below is related with the existence of solutions of the following linear system
\begin{equation}\label{homo1}
\begin{cases}
\eta_t + w_x+w_{xxx}= 0, & \text{in} \,\, (0,L)\times (0,T),\\
w_t +\eta_x +\eta_{xxx} =0,  & \text{in} \,\, (0,L)\times (0,T), \\
\eta(0,t)=\eta(L,t)=\eta_{x}(0,t)=0,&\text{on} \,\, (0,T), \\
w(0,t)=w(L,t)=w_{x}(L,t)=0, &\text{on}dg \,\, (0,T), \\
\eta(x,0)= \eta_0(x), \quad w(x,0)=  w_0(x), & \text{on} \,\, (0,L), \\
\end{cases}
\end{equation}
and can be found in \cite[Proposition 2.1]{capistrano2016}, thus we will omit the proof.
\begin{thm}\label{teo1rapid}
Let $(\eta_0,w_0) \in X_0$.  Let $A(\varphi,\psi)=(-\psi'-\psi''',-\varphi'-\varphi''')$ with domain $D(A)=\overline{X}_3$.
Then, there exists a unique mild solution of \eqref{homo1} such that $$ (\eta,w)=S(\cdot)(\eta_0,w_0) \in C(\R^+; X_0),$$ where $S(\cdot)$ is a group of isometries in $X_0$ generated by operator $A$. Moreover, if $(\eta_0,w_0) \in D(A)$, then \eqref{homo1} has a unique (classical) solution $(\eta,w)$ belongs to $C(\R^+ ;D(A))\cap C^1(\R^+;X_0)$.
\end{thm}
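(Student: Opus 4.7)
The plan is to cast the problem in semigroup language and show that the operator $A$ with $D(A)=\overline{X}_3$ generates a $C_0$-group of isometries on $X_0$, from which the mild and classical solution statements follow by standard linear semigroup theory. The cleanest route is to verify the hypotheses of the Lumer--Phillips theorem for both $A$ and $-A$; the density of $D(A)$ in $X_0=[L^2(0,L)]^2$ is immediate since $\overline{X}_3$ contains $[C_c^\infty(0,L)]^2$.

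First I would check skew-symmetry of $A$: for $U=(\varphi,\psi)\in D(A)$ integration by parts gives
\begin{equation*}
\langle AU,U\rangle_{X_0}=-\int_0^L(\varphi\psi)_x\,dx-\int_0^L\!\bigl(\varphi\psi_{xxx}+\psi\varphi_{xxx}\bigr)\,dx.
\end{equation*}
The first term vanishes thanks to $\varphi(0)=\varphi(L)=\psi(0)=\psi(L)=0$. Three more integrations by parts on $\int\varphi\psi_{xxx}$ produce boundary terms $[\varphi\psi_{xx}]_0^L$, $-[\varphi_x\psi_x]_0^L$ and $[\varphi_{xx}\psi]_0^L$; the first and third die from $\varphi(0)=\varphi(L)=\psi(0)=\psi(L)=0$, while the middle term dies because $\varphi_x(0)=0$ and $\psi_x(L)=0$. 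This is precisely the reason the particular choice of boundary conditions in $\overline{X}_3$ is made. The outcome is $\int_0^L\varphi\psi_{xxx}\,dx=-\int_0^L\psi\varphi_{xxx}\,dx$, hence $\langle AU,U\rangle_{X_0}=0$; the same identity applied to $-A$ shows that both $A$ and $-A$ are dissipative.

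Second, I would establish the range condition, namely that $I-A$ (and by the symmetric argument $I+A$) maps $D(A)$ onto $X_0$. This amounts to solving, for arbitrary $(f,g)\in X_0$, the stationary system $\varphi+\psi_x+\psi_{xxx}=f$, $\psi+\varphi_x+\varphi_{xxx}=g$ in $\overline{X}_3$. Setting $u=\varphi+\psi$, $v=\varphi-\psi$ decouples the bulk equations into $u+u_x+u_{xxx}=f+g$ and $v-v_x-v_{xxx}=f-g$, two scalar stationary KdV-type third-order ODEs. The translated boundary conditions $u(0)=u(L)=v(0)=v(L)=0$, $u_x(0)+v_x(0)=0$, $u_x(L)-v_x(L)=0$ couple the two problems only through two scalar parameters, so existence in $H^3$ reduces to solving each ODE in terms of those parameters and then closing a $2\times 2$ linear system. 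This last step is the main obstacle: one must show the $2\times 2$ system is nonsingular, which after unwinding the transformation is equivalent to injectivity of $I-A$ on $D(A)$, and that injectivity is in turn a direct consequence of the skew-symmetry established in the previous step (together with the \emph{a priori} $L^2$ estimate $\|U\|_{X_0}\leq\|(I-A)U\|_{X_0}$).

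Finally, dissipativity of $\pm A$ together with the range conditions gives, via Lumer--Phillips, that both $A$ and $-A$ generate $C_0$-semigroups of contractions; putting them together yields a $C_0$-group $\{S(t)\}_{t\in\R}$ on $X_0$, and the identity $\langle AU,U\rangle_{X_0}=0$ forces conservation of the $X_0$-norm along the flow, so $S(t)$ is an isometry for every $t$. The standard semigroup dichotomy then delivers $S(\cdot)(\eta_0,w_0)\in C(\R^+;X_0)$ as the mild solution for data in $X_0$, and $S(\cdot)(\eta_0,w_0)\in C(\R^+;D(A))\cap C^1(\R^+;X_0)$ as the classical solution whenever $(\eta_0,w_0)\in D(A)=\overline{X}_3$.
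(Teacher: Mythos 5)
The paper itself offers no proof of this theorem --- it is quoted from Proposition 2.1 of the cited reference and the proof is explicitly omitted --- so there is nothing internal to compare against; your argument is the standard semigroup proof for such statements and is essentially correct: the skew-symmetry computation is right (the only nontrivial boundary term, $\left[\varphi_x\psi_x\right]_0^L$, vanishes exactly because $\varphi_x(0)=\psi_x(L)=0$, which is indeed why $\overline{X}_3$ carries those conditions), and Lumer--Phillips applied to $\pm A$ plus the identity $\langle AU,U\rangle_{X_0}=0$ delivers the group of isometries and both the mild and classical solution statements. The one step you assert rather than verify is that each decoupled scalar two-point problem (e.g.\ $u+u_x+u_{xxx}=F$, $u(0)=u(L)=0$) admits an $H^3$ solution depending on exactly one free parameter: this requires the $2\times 3$ matrix obtained by evaluating a basis of $\ker(\partial_x^3+\partial_x+1)$ at $x=0$ and $x=L$ to have rank $2$, which does hold because the roots of $r^3+r+1$ are distinct, the real one negative and the conjugate pair with positive real part, so the row of values at $L$ cannot be proportional to $(1,1,1)$ --- but this should be recorded, since the rest of your surjectivity argument (nonsingularity of the closing $2\times 2$ system via injectivity of $I-A$) rests on it.
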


Using Theorem \ref{teo1rapid} combined with some interpolation argument between $\overline{X}_0$ and $\overline{X}_3$, we infer 
for any $s \in [0, 3]$, there exists a constant $C_s > 0$ such that for any $(\eta_0, w_0) \in \overline{X}_s$, the solution $(\eta, w)$ of \eqref{homo1} satisfies $(\eta,w) \in C(\R;\overline{X}_s)$ and
\begin{equation}\label{new1}
\|(\eta(t),w(t))\|_{\overline{X}_s} \leq  C_s\|(\eta_0, w_0)\|_{\overline{X}_s}, \quad \forall t \in \R,
\end{equation}
for instance, see \cite{capistrano2016} for more details. 


\subsection{Well-posedness: Nonlinear system}  We are now in position of to prove the well-posedness for the nonlinear system \eqref{n1'''}-\eqref{n1.1''}, where $f(t):=F_1(\eta(t),w(t))$ and $g(t):= F_2(\eta(t),w(t))$, with $F_i$ belongs to $\LL(X_0;\mathbb{R})$, $i=1,2$. The following theorem can be proved.
\begin{thm}\label{nonlinearexitence1}
Let $F_i:X_0\longrightarrow \R$ be a continuous linear map for $i=1,2$ and $T>0$. Then, there exists $\rho>0$ such that,  for every $(\eta_0,w_0) \in \overline{X}_2$ satisfying  
\begin{equation*}
\|(\eta_0,w_0)\|_{ \overline{X}_2 }< \rho,
\end{equation*}
 there exists a unique solution $(\eta, w) \in C([0,T]; \overline{X}_2)$ of \eqref{n1'''}-\eqref{n1.1''} with $f(t):=F_1(\eta(t),w(t))$ and $g(t):=F_2(\eta(t),w(t))$. Moreover
\begin{align*}
\|(\eta,w)\|_{C([0,T]; \overline{X}_2)} \leq C \|(\eta_0,w_0)\|_{ \overline{X}_2}
\end{align*}
for some positive constant $C=C(T)$.
\end{thm}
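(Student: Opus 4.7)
The plan is a Banach fixed-point argument in a closed ball of $C([0,T]; \overline{X}_2)$. For $R>0$ to be fixed later, set
$$B_R := \{(\eta,w) \in C([0,T]; \overline{X}_2) : \|(\eta,w)\|_{C([0,T]; \overline{X}_2)} \leq R\},$$
and, for each $(\bar\eta,\bar w)\in B_R$, define $\Gamma(\bar\eta,\bar w)=(\eta,w)$ as the unique solution of the inhomogeneous linear system
$$\eta_t+w_x+w_{xxx}=-(\bar\eta\,\bar w)_x,\qquad w_t+\eta_x+\eta_{xxx}=-\bar w\,\bar w_x,$$
with initial data $(\eta_0,w_0)$ and boundary conditions \eqref{n1.1''} in which $f(t):=F_1(\bar\eta(\cdot,t),\bar w(\cdot,t))$ and $g(t):=F_2(\bar\eta(\cdot,t),\bar w(\cdot,t))$. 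A fixed point of $\Gamma$ is, by construction, a solution of the closed-loop system.

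The first step is to combine Theorem \ref{teo1rapid} and the interpolation bound \eqref{new1} (for $s=2$) with a Duhamel formula for the homogeneous group $S(t)$ and a lifting of the boundary data $(f,g)$, using the sharp trace/hidden-regularity estimates developed in \cite{capistrano2016}. This should produce an inhomogeneous linear estimate of the form
$$\|(\eta,w)\|_{C([0,T];\overline{X}_2)} \leq C\bigl(\|(\eta_0,w_0)\|_{\overline{X}_2} + \|(\bar\eta\,\bar w)_x\|_{L^1(0,T;\overline{X}_0)} + \|\bar w\,\bar w_x\|_{L^1(0,T;\overline{X}_0)} + \|f\|_{\mathcal{H}} + \|g\|_{\mathcal{H}}\bigr),$$
for a suitable trace space $\mathcal{H}\subset H^{s}(0,T)$. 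Since $H^2(0,L)$ is a Banach algebra and $\overline{X}_2\hookrightarrow X_0$, the nonlinear and feedback contributions obey
$$\|(\bar\eta\,\bar w)_x(t)\|_{L^2}+\|\bar w\,\bar w_x(t)\|_{L^2} \leq C\bigl(\|\bar\eta(t)\|_{H^2}\|\bar w(t)\|_{H^2}+\|\bar w(t)\|_{H^2}^2\bigr) \leq C R^2,$$
together with $|f(t)|+|g(t)|\leq (\|F_1\|+\|F_2\|)\|(\bar\eta(t),\bar w(t))\|_{X_0}\leq C_F R$. Integrating in time yields $\|(\bar\eta\bar w)_x\|_{L^1(0,T;\overline{X}_0)}+\|\bar w\bar w_x\|_{L^1(0,T;\overline{X}_0)}\leq CTR^2$ and $\|f\|_{\mathcal{H}}+\|g\|_{\mathcal{H}}\leq C_F T^{\beta} R$ for some $\beta>0$.

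Plugging these bounds into the linear estimate gives
$$\|\Gamma(\bar\eta,\bar w)\|_{C([0,T];\overline{X}_2)} \leq C\bigl(\|(\eta_0,w_0)\|_{\overline{X}_2}+TR^2+T^{\beta} R\bigr).$$
Choosing $R=2C\|(\eta_0,w_0)\|_{\overline{X}_2}$ and then taking $\rho$ small (working on a short time interval and patching, if necessary, via the linear estimate to reach the prescribed $T$) produces $\Gamma(B_R)\subset B_R$. The same linear machinery applied to the difference $\Gamma(\bar\eta^1,\bar w^1)-\Gamma(\bar\eta^2,\bar w^2)$, together with the bilinearity of $(\eta w)_x$ and $w w_x$ and the linearity of $F_1,F_2$, yields a contraction estimate with constant $\tfrac12$ provided $\rho$ is chosen sufficiently small. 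The Banach fixed-point theorem then furnishes the unique solution $(\eta,w)\in B_R$ satisfying the advertised bound $\|(\eta,w)\|_{C([0,T];\overline{X}_2)}\leq C\|(\eta_0,w_0)\|_{\overline{X}_2}$.

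The main obstacle is establishing the inhomogeneous linear estimate in the correct regularity class. One must combine the semigroup bound \eqref{new1} with sharp hidden-regularity/trace estimates to absorb the boundary inputs $f,g$, while simultaneously respecting the compatibility condition $\eta_x(0)=w_x(L)=0$ built into $\overline{X}_2$. This typically requires either a careful lift of $(f,g)$ together with a decomposition of the solution into pieces carrying, respectively, the initial data, the interior source, and the boundary data, or a direct multiplier/transposition argument. Once this linear estimate is in hand, the Banach algebra structure of $H^2(0,L)$ and the continuity of $F_1,F_2$ close the fixed-point argument routinely.
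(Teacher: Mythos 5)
Your overall architecture --- a Banach fixed point in a ball of $C([0,T];\overline{X}_2)$, the Banach algebra property of $H^2(0,L)$ to control $(\bar\eta\bar w)_x$ and $\bar w\bar w_x$, and smallness of $\rho$ to close the contraction --- is exactly the paper's. But the linchpin, which you yourself flag as ``the main obstacle,'' is left unproved, and this is where the paper does its real work. The paper does not use Duhamel plus a boundary lifting plus hidden-regularity estimates: it \emph{defines} solutions of the inhomogeneous problem \eqref{nhs1} by transposition against the adjoint system \eqref{adjoint'} (Definition \ref{def}), and obtains existence, uniqueness and the estimate \eqref{depend1} in $C([0,T];\overline{X}_2)$ from the Riesz representation theorem applied to the functional $L(\varphi_\tau,\psi_\tau)$, the boundary terms being controlled simply by $\|\psi_x(0,\cdot)\|_{L^2(0,T)}+\|\varphi_x(L,\cdot)\|_{L^2(0,T)}\leq C\|(\varphi,\psi)\|_{C([0,T];\overline{X}_2)}$ via the embedding $H^2\subset W^{1,\infty}$ (Lemma \ref{existentransposition1}). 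This only ever requires $(f,g)\in[L^2(0,T)]^2$. Your version, by contrast, posits a trace space $\mathcal{H}\subset H^s(0,T)$ and a bound $\|f\|_{\mathcal{H}}\leq C_FT^\beta R$; if $\mathcal{H}$ carries any positive time-regularity, then $f(t)=F_1(\bar\eta(\cdot,t),\bar w(\cdot,t))$ for a generic $(\bar\eta,\bar w)\in C([0,T];\overline{X}_2)$ need not belong to it (continuity in $t$ gives only $f\in C([0,T])\subset L^2(0,T)$), so the iteration would not close. Either you must prove the linear estimate with $L^2(0,T)$ boundary data (as the paper does), or you must propagate time-regularity of the iterates, which you have not set up.

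A second, smaller divergence: you evaluate the feedback at the \emph{input} $(\bar\eta,\bar w)$ of $\Gamma$, so the feedback contributes a term of size $C_FT^{1/2}\|(\bar\eta^1-\bar\eta^2,\bar w^1-\bar w^2)\|$ to the contraction constant that is not made small by shrinking $\rho$; you are then forced into a short-time argument plus patching \emph{for the nonlinear problem}, and patching a small-data nonlinear result requires re-verifying the smallness threshold at each restart (doable for fixed $T$, with $\rho$ shrinking geometrically in the number of steps, but you gloss over it). The paper sidesteps this by first proving Lemma \ref{exitencefeedback1}: the \emph{linear} system with feedback evaluated at the output is solved on all of $[0,T]$ by a fixed point on a short interval $\beta$ depending only on $\|F_1\|,\|F_2\|$, followed by continuation (legitimate because that problem is linear and needs no smallness). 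The nonlinear fixed point is then performed in one shot on $[0,T]$, iterating only on the source terms $(-(\eta w)_x,-ww_x)$, and the contraction constant $12T^{1/2}C_T^2C_1\rho$ is controlled purely by $\rho$. I would recommend adopting both of the paper's devices: the transposition formulation to get the linear estimate with $L^2$ boundary data, and the separation of the feedback into its own linear lemma so that the nonlinear contraction needs only smallness of $\rho$.
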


Before to present the proof of the Theorem \ref{nonlinearexitence1}, it is necessary to establish some definition to recall how the solution of
the problem \eqref{n1'''}-\eqref{n1.1''} can be defined.
\begin{definition}\label{def}
Given $T>0$, $(\eta_0,w_0) \in \overline{X}_2$, $(h_1,h_2) \in L^2(0,T;X_{-1})$ and $(f,g) \in  [L^2(0,T)]^2$. Consider the non-homogeneous system 
\begin{equation}
\label{nhs1}
\begin{cases}
\eta_t + w_x+w_{xxx}= h_1, & \text{in} \,\, (0,L)\times (0,T),\\
w_t +\eta_x +\eta_{xxx} =h_2,  & \text{in} \,\, (0,L)\times (0,T), \\
\eta(0,t)=0,\,\,\eta(L,t)=0,\,\,\eta_{x}(0,t)=f(t),& \text{on} \,\, (0,T), \\
w(0,t)=0,\,\,w(L,t)=0,\,\,w_{x}(L,t)=g(t), & \text{on} \,\, (0,T), \\
\eta(x,0)= \eta_0(x), \quad w(x,0)=  w_0(x), & \text{on} \,\, (0,L).
\end{cases}
\end{equation}
A solution of the problem \eqref{nhs1} is a function $(\eta,w)$ in $C([0,T],\overline{X}_2)$ such that, for any $\tau \in [0,T]$ and $(\varphi_\tau,\psi_\tau) \in \overline{X}_2$, the following identity holds
\begin{multline}\label{transp1}
\left((\eta(\tau),w(\tau)),(\varphi_\tau,\psi_\tau)\right)_{\overline{X}_2} = \left((\eta_0,w_0),(\varphi(0),\psi(0))\right)_{\overline{X}_2}  -\int_0^\tau f(t)\psi_x(0,t)dt \\
+\int_0^\tau g(t)\varphi_x(L,t)dt+ \int_0^\tau \left\langle \varphi(t), h_1(t)\right\rangle_{H^1_0\times H^{-1}} dt+\int_0^\tau \left\langle \psi (t),h_2(t)\right\rangle_{H^{1}_0\times H^{-1}}  dt,
\end{multline}
where $\left(\cdot,\cdot\right)_{\overline{X}_2}$ is the inner product of $\overline{X}_2$, $\left\langle\cdot,\cdot\right\rangle$ is the duality of two spaces and $(\varphi,\psi)$ is the solution of 
\begin{equation}\label{adjoint'}
\begin{cases}
\varphi_t +\psi_x+\psi_{xxx}= 0, & \text{in} \,\, (0,L)\times (0,\tau),\\
\psi_t +\varphi_x +\varphi_{xxx}=0  & \text{in} \,\, (0,L)\times (0,\tau), \\
\varphi(0,t)=\varphi(L,t)=\varphi_{x}(0,t)=0,& \text{on} \,\,  (0,\tau),\\
\psi(0,t)=\psi(L,t)=\psi_{x}(L,t)=0, & \text{on} \,\,  (0,\tau),\\
\varphi(x,\tau)=\varphi_\tau,\quad \psi(x,\tau)=\psi_\tau, & \text{on} \,\, (0,L).
\end{cases}
\end{equation}
\end{definition}
The following result ensures the existence and uniqueness of the solution for the system \eqref{nhs1}.
\begin{lemma}\label{existentransposition1}
Let $T>0$, $(\eta_0,w_0) \in \overline{X}_2$, $(h_1,h_2) \in L^2(0,T;X_{-1})$ and $(f,g) \in [L^2(0,T)]^2$. There exists a unique solution $(\eta,w) \in C([0,T];\overline{X}_2)$ of the system \eqref{nhs1}. Moreover, there exists a positive constant $C_T$, such that 
\begin{equation}\label{depend1}
\|(\eta(\tau),w(\tau))\|_{\overline{X}_2}\leq C_T \left(\|(\eta_0,w_0)\|_{\overline{X}_2}+\|(f,g)\|_{[L^{2}(0,T)]^2}+ \|(h_1,h_2)\|_{L^2(0,T;X_{-1})}\right), 
\end{equation}
for all $\tau \in [0,T]$.
\end{lemma}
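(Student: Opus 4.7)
The plan is to apply the transposition (duality) method built on the adjoint well-posedness given by Theorem \ref{teo1rapid} together with the interpolation estimate \eqref{new1}. The whole argument rests on the observation that the identity \eqref{transp1} may be read as a Riesz representation in the Hilbert space $\overline{X}_2$.

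First I would establish the adjoint well-posedness at the $\overline{X}_2$ level. For terminal data $(\varphi_\tau,\psi_\tau)\in\overline{X}_2$ and any $\tau\in[0,T]$, the backward-in-time system \eqref{adjoint'} becomes, after the change of variable $t\mapsto \tau-t$, a forward problem of the same structure as \eqref{homo1}. Theorem \ref{teo1rapid} combined with \eqref{new1} for $s=2$ yields a unique $(\varphi,\psi)\in C([0,\tau];\overline{X}_2)$ with
\begin{equation*}
\|(\varphi,\psi)\|_{C([0,\tau];\overline{X}_2)} \leq C_{2}\|(\varphi_\tau,\psi_\tau)\|_{\overline{X}_2}.
\end{equation*}
Since $\overline{X}_2\hookrightarrow [H^2(0,L)]^2$, the traces $\psi_x(0,\cdot)$ and $\varphi_x(L,\cdot)$ belong to $L^\infty(0,\tau)\subset L^2(0,\tau)$ with norms controlled by $\|(\varphi_\tau,\psi_\tau)\|_{\overline{X}_2}$; and the embedding $\overline{X}_2\hookrightarrow [H^1_0(0,L)]^2$ controls $(\varphi,\psi)$ in $C([0,\tau];[H^1_0(0,L)]^2)$ with the same constant.

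Next, for each $\tau\in[0,T]$ I would define $\Lambda_\tau:\overline{X}_2\to\R$ by letting $\Lambda_\tau(\varphi_\tau,\psi_\tau)$ be the right-hand side of \eqref{transp1}. The initial-data term is bounded by $\|(\eta_0,w_0)\|_{\overline{X}_2}\|(\varphi(0),\psi(0))\|_{\overline{X}_2}$; the two boundary terms are bounded by $\|(f,g)\|_{[L^2(0,T)]^2}$ times the $L^2(0,\tau)$-norms of the traces from Step 1 via Cauchy--Schwarz; and the source terms are bounded by $\|(h_1,h_2)\|_{L^2(0,T;X_{-1})}\|(\varphi,\psi)\|_{L^2(0,\tau;X_1)}$ via the $H^1_0$--$H^{-1}$ duality. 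Summing, $\Lambda_\tau$ is continuous with norm dominated by the right-hand side of \eqref{depend1}. By the Riesz representation theorem on $\overline{X}_2$ there is a unique $(\eta(\tau),w(\tau))\in\overline{X}_2$ representing $\Lambda_\tau$, and its $\overline{X}_2$-norm satisfies \eqref{depend1}.

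To obtain time-continuity of $\tau\mapsto(\eta(\tau),w(\tau))$ in $\overline{X}_2$ I would first approximate $(\eta_0,w_0)$, $(f,g)$ and $(h_1,h_2)$ by smooth data (using $\overline{X}_3$, $C^\infty_c$ and $C^1([0,T];X_0)$ respectively). For such data one can construct a classical solution by semigroup methods and verify \eqref{transp1} through integration by parts, so it coincides with the one produced by Riesz; continuity in $\tau$ is then immediate from the strong continuity of the adjoint group on $\overline{X}_2$ and the absolute continuity of the boundary/source integrals. Passing to the limit with the uniform bound \eqref{depend1} yields $(\eta,w)\in C([0,T];\overline{X}_2)$ in the general case. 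Uniqueness is a direct consequence of \eqref{transp1}: the difference of two solutions satisfies $((\tilde\eta(\tau),\tilde w(\tau)),(\varphi_\tau,\psi_\tau))_{\overline{X}_2}=0$ for every $(\varphi_\tau,\psi_\tau)\in\overline{X}_2$ and every $\tau\in[0,T]$.

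The most delicate step, and the only one requiring care, is controlling the boundary contributions $\int_0^\tau f(t)\psi_x(0,t)\,dt$ and $\int_0^\tau g(t)\varphi_x(L,t)\,dt$ in Step 2: this is the ``hidden regularity'' input that the transposition scheme always relies upon. Here it is made easy by the fact that the functional is set up on the high-regularity space $\overline{X}_2$, so that the traces $\psi_x(0,\cdot)$ and $\varphi_x(L,\cdot)$ are directly in $L^\infty(0,\tau)$ via Sobolev embedding; were one to attempt the same strategy at the energy level $X_0$, a genuine multiplier (Morawetz-type) estimate on the adjoint system would be needed, as in \cite{capistrano2016}.
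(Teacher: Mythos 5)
Your proposal is correct and follows the same transposition skeleton as the paper: solve the adjoint system \eqref{adjoint'} at the $\overline{X}_2$ level via the group $S^*(\tau-t)$ and the bound \eqref{new1}, read the right-hand side of \eqref{transp1} as a bounded linear functional on $\overline{X}_2$ (with the boundary terms controlled exactly as you say, through $H^2(0,L)\hookrightarrow C^1([0,L])$ so that $\psi_x(0,\cdot)$ and $\varphi_x(L,\cdot)$ are bounded by $\|(\varphi_\tau,\psi_\tau)\|_{\overline{X}_2}$, and the source terms through the $H^1_0$--$H^{-1}$ duality), and invoke Riesz representation to get $(\eta(\tau),w(\tau))$ together with \eqref{depend1}. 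The only place you genuinely diverge is the continuity of $\tau\mapsto(\eta(\tau),w(\tau))$: the paper proves it directly, by taking $\tau_n\to\tau$ and test data $(\varphi_{\tau_n},\psi_{\tau_n})\to(\varphi_\tau,\psi_\tau)$, extending $f,g,h_i$ by zero, changing variables $s=\tau_n-t$, and combining dominated convergence for the shifted data with weak continuity of the trace map $(\varphi_\tau,\psi_\tau)\mapsto \frac{d}{dx}\left(S^*(\cdot)(\varphi_\tau,\psi_\tau)\right)\big|_0^L$ to pass to the limit in each term of the functional; you instead approximate the data by smooth, compatible data, build classical solutions, and transfer continuity to the limit using the uniform bound \eqref{depend1}. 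Both routes work. Your density argument is arguably cleaner, but it hides one point that deserves a sentence: the approximating problems carry non-homogeneous Neumann data $\eta_x(0,t)=f(t)$, $w_x(L,t)=g(t)$, so the ``semigroup methods'' require a boundary lifting and zero-order compatibility at $t=0$ (automatic if you take $f,g\in C^\infty_c(0,T)$ and initial data in $\overline{X}_3$, as you propose), after which the verification of \eqref{transp1} by integration by parts is routine. The paper's direct argument avoids constructing classical solutions altogether, at the price of a longer convergence analysis.
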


\begin{proof}
Let $T>0$ and $\tau\in[0,T]$. Note that making the change of variable $(x,t)\mapsto (\varphi(x,\tau-t),\psi(x,\tau-t))$,  from \eqref{new1} and the fact that the operator $A$ is skew adjoint, we have that the solution of \eqref{adjoint'}  is given by $$(\varphi,\psi)=S^*(\tau-t)(\varphi_\tau,\psi_\tau)=-S(\tau-t)(\varphi_\tau,\psi_\tau).$$ Moreover,  $$(\varphi,\psi)\in C(\R;\overline{X}_2),$$ where $\{S(t)\}_{t\geq0}$ is a $C_0$-group generated by $A$. In particular, there exists $C_T>0$, such that
\begin{equation}\label{adj1'}
\|(\varphi(t),\psi(t))\|_{\overline{X}_2}=\|S^*(\tau-t)(\varphi_\tau,\psi_\tau)\|_{\overline{X}_2}\leq C_T\|(\varphi_\tau,\psi_{\tau})\|_{\overline{X}_2}, \quad \forall t\in [0,\tau].
\end{equation}
Let us define $L$ as a linear functional given by the right hand side of \eqref{transp1}, that is,
\begin{multline*}
L(\varphi_\tau,\psi_\tau)= \left( (\eta_0,w_0),S^*(\tau)(\varphi_\tau,\psi_\tau) \right)_{\overline{X}_2}  +\int_0^\tau \left((g(t),f(t)),\frac{d}{dx}(S^*(\tau-t)(\varphi_\tau,\psi_\tau))\Big|^L_0\right)dt \\
+ \int_0^\tau \left\langle(h_1(t), h_2(t)), S^*(\tau-t)(\varphi_\tau,\psi_\tau)\right\rangle_{(H^{-1}\times H^{1}_0)^2}dt.
\end{multline*}
\vspace{0.1cm}
\noindent \textit{Claim.} $L$ belongs to  $\LL(\overline{X}_2;\mathbb{R})$. 
\vspace{0.1cm}

Indeed, 
\begin{align*}
|L(\varphi_\tau,\psi_\tau)|\leq  & C_T\|(\eta_0,w_0)\|_{\overline{X}_2}\|(\varphi_\tau,\psi_\tau))\|_{\overline{X}_2} + C_T\|(\varphi_\tau,\psi_\tau)\|_{\overline{X}_2}\|(h_1,h_2)\|_{L^1(0,T;X_{-1})} \\
&+ \|(f,g)\|_{[L^2(0,T)]^2}\left(\|\psi_x(0,\cdot)\|_{L^2(0,T)} +\|\varphi_x(L,\cdot)\|_{L^2(0,T)}\right) \\
\leq  & C_T\left(\|(\eta_0,w_0)\|_{\overline{X}_2}+ \|(h_1,h_2)\|_{L^2(0,T;X_{-1})}\right)\|(\varphi_\tau,\psi_\tau))\|_{\overline{X}_2} \\
&+ \|(f,g)\|_{[L^2(0,T)]^2}\|(\varphi_x,\psi_x)\|_{L^2(0,T;[L ^{\infty}(0,L)]^2)}  \\
\leq  & C_T\left(\|(\eta_0,w_0)\|_{\overline{X}_2}+ \|(h_1,h_2)\|_{L^2(0,T;X_{-1})}\right)\|(\varphi_\tau,\psi_\tau))\|_{\overline{X}_2} \\
&+ \|(f,g)\|_{X_0}\|(\varphi,\psi)\|_{C([0,T];\overline{X}_2)}  \\
\leq  & C_T\left(\|(\eta_0,w_0)\|_{\overline{X}_2} + \|(f,g)\|_{X_0} + \|(h_1,h_2)\|_{L^2(0,T;X_{-1})}\right)\|(\varphi_\tau,\psi_\tau))\|_{\overline{X}_2}, 
\end{align*}
where in the last inequality we use \eqref{adj1'}. Then, from the Riesz representation Theorem, there exist one and only one $(\eta_\tau, w_\tau) \in  \overline{X}_2$ such that
\begin{equation}\label{eqn1}
\left( (\eta_\tau,w_\tau),(\varphi_\tau,\psi_\tau)\right)_{\overline{X}_2}=L(\varphi_\tau,\psi_\tau), \quad \text{with} \quad  \|(\eta_\tau,w_\tau)\|_{\overline{X}_2}=\|L\|_ {\LL(\overline{X}_2;\mathbb{R})}
\end{equation}
and the uniqueness of the solution to the problem \eqref{nhs1} holds.

We prove now that the solution of the system \eqref{nhs1} satisfies  \eqref{depend1}. Let $(\eta,w) : [0, T] \longrightarrow \overline{X}_2$ be defined by
\begin{equation}\label{eqn2}
(\eta(\tau),w(\tau)):=(\eta_\tau,w_\tau), \quad \forall \tau \in [0,T].
\end{equation}
From \eqref{eqn1} and \eqref{eqn2}, \eqref{transp1} follows and 
\begin{align*}
\|(\eta(\tau),w(\tau))\|_{\overline{X}_2}= \|L\|_{ \LL(\overline{X}_2;\mathbb{R})} \leq C_T\left(\|(\eta_0,w_0)\|_{\overline{X}_2} + \|(f,g)\|_{X_0} + \|(h_1,h_2)\|_{L^2(0,T;\overline{X}_{-1})}\right).
\end{align*}
In order to prove that the solution  $(\eta, w)$ belongs to $ C([0,T],\overline{X}_2)$, let $\tau \in [0,T]$ and $\{\tau_n\}_{n\in \N}$ be a sequence such that 
\begin{equation}\label{eqn3}
\tau_n \longrightarrow \tau, \quad \text{as $n\rightarrow \infty$.}
\end{equation}
Consider $(\varphi_\tau,\psi_\tau) \in \overline{X}_2$ and $\{(\varphi_{\tau_n},\psi_{\tau_n})\}_{n\in \N}$ be a sequence in $\overline{X}_2$ such that
\begin{equation}\label{eqn4}
(\varphi_{\tau_n},\psi_{\tau_n}) \rightarrow  (\varphi_{\tau},\psi_\tau) \quad \text{strong in $\overline{X}_2$, as $n \rightarrow \infty$.}
\end{equation}
Moreover, note that 
\begin{align}
\lim_{n\rightarrow \infty}\left( (\eta_0,w_0), S^*(\tau_n)(\varphi_{\tau_n},\psi_{\tau_n})\right)_{\overline{X}_2}=  \left((\eta_0,w_0),S^*(\tau) (\varphi_{\tau},\psi_{\tau})\right)_{\overline{X}_2}. \label{eqn5}
\end{align}
Indeed, 
\begin{multline*}
\lim_{n\rightarrow \infty}\left( (\eta_0,w_0), S^*(\tau_n)(\varphi_{\tau_n},\psi_{\tau_n})\right)_{\overline{X}_2}=  \lim_{n\rightarrow \infty}\left( (\eta_0,w_0), S^*(\tau_n)\left( (\varphi_{\tau_n},\psi_{\tau_n})-(\varphi_{\tau},\psi_{\tau})\right) \right)_{\overline{X}_2} \\
 + \lim_{n\rightarrow \infty}\left( (\eta_0,w_0), S^*(\tau_n)(\varphi_{\tau},\psi_{\tau})\right)_{\overline{X}_2}.
\end{multline*}
From \eqref{eqn4} and since $\{S(t)\}_{t \geq0}$ is a strongly continuous group of continuous linear operators on $X_0$, we have 
\begin{align*}
\lim_{n\rightarrow \infty}\left( (\eta_0,w_0), S^*(\tau_n)\left( (\varphi_{\tau_n},\psi_{\tau_n})-(\varphi_{\tau},\psi_{\tau})\right) \right)_{\overline{X}_2}&=0
\end{align*}
and consequently, 
\begin{align*}
\lim_{n\rightarrow \infty}\left( (\eta_0,w_0), S^*(\tau_n)(\varphi_{\tau},\psi_{\tau})\right)_{\overline{X}_2}&=\left( (\eta_0,w_0), S^*(\tau)(\varphi_{\tau},\psi_{\tau})\right)_{\overline{X}_2}.
\end{align*}
Thus, \eqref{eqn5} follows. Now, 
extending by zero the functions $f$, $g$ and $h_i$, for $i=1,2$, to obtain elements of $L^2(-T,T)$ and $L^2(-T,T;\overline{X}_{-1})$, that is, $$f\equiv g\equiv0 \quad  \text{ on } (-T,0) \quad  \text{and} \quad h_i \equiv 0\quad  \text{a.e in  $(-T,0)\times(0,L)$} $$
and setting $s=\tau_n-t$, we have that
\begin{multline}\label{eqn6}
\int_0^{\tau_n}\left((g(t),f(t)),\frac{d}{dx}(S^*(\tau_n-t)(\varphi_{\tau_n},\psi_{\tau_n}))\Big|_0^L\right)dt \\
=\int_0^T \chi_n(s)\left((g(\tau_n-s),f(\tau_n-s)),\frac{d}{dx}(S^*(s)(\varphi_{\tau_n},\psi_{\tau_n}))\Big|_0^L\right) dt
\end{multline}
and
\begin{multline}\label{eqn6'}
\int_0^{\tau_n}\left\langle(h_1(t),h_2(t)),S^*(\tau_n-t)(\varphi_{\tau_n},\psi_{\tau_n})\right\rangle_{H^{-1}\times H^1_0}dt \\
=\int_0^T \chi_n(s)\left\langle(h_1(\tau_n-s),h_2(\tau_n-s)),S^*(s)(\varphi_{\tau_n},\psi_{\tau_n})\right\rangle_{H^{-1}\times H^1_0} dt
\end{multline}
where $\chi_n$ is the characteristic functions of $[0,\tau_n]$. Similarly, doing $s=\tau-t$, we get
\begin{multline}\label{eqn7}
\int_0^\tau\left((g(t),f(t)), \frac{d}{dx}(S^*(\tau-t)(\varphi_\tau,\psi_\tau))\Big|_0^L\right)dt \\
=\int_0^T \chi(s)\left((g(\tau-s),f(\tau-s)),\frac{d}{dx}(S^*(s)(\varphi_\tau,\psi_\tau))\Big|_0^L\right)dt
\end{multline}
and
\begin{multline}\label{eqn7'}
\int_0^\tau\left\langle (h_1(t),h_2(t)), S^*(\tau-t)(\varphi_\tau,\psi_\tau) \right\rangle_{H^{-1}\times H^1_0}dt \\
=\int_0^T \chi(s)\left\langle ( h_1(\tau-s),h_2(\tau-s)), S^*(s)(\varphi_\tau,\psi_\tau) \right\rangle_{H^{-1}\times H^1_0}dt, 
\end{multline}
where$\chi$ is the characteristic functions of $[0,\tau]$. Thus, by using the convergence dominated Theorem, one gets that 
\begin{equation}\label{eqn8}
\chi_n(\cdot)(g(\tau_n-\cdot),f(\tau_n-\cdot)) \longrightarrow \chi(\cdot)(g(\tau-\cdot),f(\tau-\cdot)) \quad \text{in $L^2((0,T);\R^2)$ as $n\rightarrow\infty$}.
\end{equation} 
Moreover, since the translation in time is continuous in $L^2(0,T, X_{-1})$ and using again convergence dominated theorem, we obtain
\begin{equation}\label{eqn8'}
\chi_n(\cdot)(h_1(\tau_n-\cdot,\cdot),h_2(\tau_n-\cdot,\cdot)) \longrightarrow \chi(\cdot)(h_1(\tau-\cdot,\cdot),h_2(\tau-\cdot,\cdot)) \quad \text{in $L^2(0,T, X_{-1})$ as $n\rightarrow\infty$}.
\end{equation} 
Observe that, by the group properties of $S^*$, we have that
\begin{align*}
\int_0^T\left| \frac{d}{dx}(S^*(t)(\varphi_{\tau},\psi_{\tau}))\Big|_0^L\right|^2dt &= \int_0^T \left(|\varphi_x(L,\tau-t)|^2+|\psi_x(0,\tau-t)|^2\right)dt  \\
&\leq \|(\varphi_x,\psi_x)\|_{L^2(-T,T,[L^{\infty}(0,L)]^2)} \\
&\leq C\|(\varphi,\psi)\|_{C([-T,T];\overline{X}_2)}^2 \\
&\leq C \|(\varphi_\tau,\psi_\tau)\|^2_{\overline{X}_2}, \quad \forall (\varphi_\tau,\psi_\tau) \in \overline{X}_2.
\end{align*}
Thus, the linear map $(\varphi_\tau,\psi_\tau) \in \overline{X}_2 \mapsto \frac{d}{dx}(S^*(\cdot)(\psi_{\tau},\varphi_{\tau}))\Big|_0^L$ belongs to $L^2(0,T; \R^2)$ and it is continuous. Since a continuous linear map between two Hilbert spaces is weakly continuous, \eqref{eqn4} implies that 
\begin{align}\label{eqn9}
 \frac{d}{dx}(S^*(\cdot)(\varphi_{\tau_n},\psi_{\tau_n}))\Big|_0^L \rightharpoonup  \frac{d}{dx}(S^*(\cdot)(\varphi_\tau,\psi_\tau))\Big|_0^L \quad \text{weakly in } L^2([-T,T];\R^2) \text{ as } n\rightarrow\infty.
\end{align}
Similarly, by the strong continuity of the group, it follows that
\begin{align}\label{eqn9'}
S^*(\cdot)(\varphi_{\tau_n},\psi_{\tau_n}) \rightharpoonup   S^*(\cdot)(\varphi_\tau,\psi_\tau)  \quad \text{weakly in } L^2(-T,T,X_0) \text{ as } n\rightarrow\infty,
\end{align}
in particular, we obtain that 
\begin{align}\label{eqn9''}
S^*(\cdot)(\varphi_{\tau_n},\psi_{\tau_n}) \rightharpoonup   S^*(\cdot)(\varphi_\tau,\psi_\tau)  \quad \text{weakly in }  L^2(-T,T,\overline{X}_1)  \text{ as } n\rightarrow\infty.
\end{align}
Thus, \eqref{eqn6}-\eqref{eqn9''} yields that 
\begin{multline}\label{eqn10}
\lim_{n\rightarrow\infty}\int_0^{\tau_n} \left((g(t),f(t)),\frac{d}{dx}(S^*(\tau_n-t)(\varphi_{\tau_n},\psi_{\tau_n}))\Big|_0^L\right) dt \\
=\int_0^\tau \left((g(t),f(t)),\frac{d}{dx}(S^*(\tau-t)(\varphi_{\tau},\psi_{\tau}))\Big|_0^L\right) dt 
\end{multline}
and
\begin{multline}\label{eqn10'}
\lim_{n\rightarrow\infty}\int_0^{\tau_n} \left\langle (h_1(t),h_2(t)),S^*(\tau_n-t)(\varphi_{\tau_n},\psi_{\tau_n})\right\rangle_{H^{-1}\times H^1_0} dt \\
=\int_0^\tau \left\langle(h_1(t),h_2(t)), S^*(\tau-t)(\varphi_{\tau},\psi_{\tau})\right\rangle_{H^{-1}\times H^1_0}dt .
\end{multline}
Finally, from \eqref{eqn1}, \eqref{eqn2}, \eqref{eqn5}, \eqref{eqn10} and \eqref{eqn10'}, one gets
\begin{align*}
\left( (\eta(\tau_n),w(\tau_n)),(\varphi_{\tau_n},\psi_{\tau_n})\right)_{\overline{X}_2 }\longrightarrow \left( (\eta(\tau),w(\tau)),(\varphi_{\tau},\psi_{\tau}))\right)_{\overline{X}_2} \quad \text{as $n\rightarrow\infty,$}
\end{align*}
which implies that 
\begin{align*}
\left(\eta(\tau_n),w(\tau_n)\right) \longrightarrow \left( \eta(\tau),w(\tau)\right) \quad \text{in $\overline{X}_2$, as $n\rightarrow\infty$}.
\end{align*}
This concludes the proof.
\end{proof}
The next result establish the well-posedness of the non-homogeneous feedback linear system associated to \eqref{nhs1}. 
\begin{lemma}\label{exitencefeedback1}
Let $T>0$ and $F_i:X_0 \longrightarrow \R$ be a continuous bilinear map, for $i=1,2$. Then, for every $(\eta_0,w_0)$ in $\overline{X}_2$ and $(h_1,h_2) \in L^2(0,T;X_{-1})$, there exists a unique solution $(\eta, w)$ of the system \eqref{nhs1} such that  $$(\eta, w)\in C([0,T];\overline{X}_2),$$  with $f(t):=F_1(\eta(t),w(t))$ and $g(t):=F_2(\eta(t),w(t))$. Moreover, for some positive constant $C=C(T)$, we have 

\begin{align*}
\|(\eta(t),w(t))\|_{\overline{X}_2} \leq C\left( \|(\eta_0,w_0)\|_{\overline{X}_2} + \|(h_1,h_2)\|_{L^2(0,T;X_{-1})}\right), \quad \forall t \in [0,T].
\end{align*}
\end{lemma}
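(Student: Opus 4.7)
The plan is a Banach fixed-point argument built on top of Lemma \ref{existentransposition1}, which already provides the open-loop solvability and the continuous dependence estimate \eqref{depend1} for arbitrary boundary data $(f,g)\in [L^2(0,T)]^2$. All that remains is to close the loop by coupling $f = F_1(\eta,w)$ and $g = F_2(\eta,w)$ via a contraction.

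For $T_0 \in (0,T]$ to be fixed later, I would define
\[
\Gamma : C([0,T_0];\overline{X}_2) \longrightarrow C([0,T_0];\overline{X}_2), \qquad (\tilde\eta,\tilde w) \longmapsto (\eta,w),
\]
where $(\eta,w)$ is the unique solution given by Lemma \ref{existentransposition1} with data $(\eta_0,w_0)$ and $(h_1,h_2)$ and with boundary controls $f(t):=F_1(\tilde\eta(t),\tilde w(t))$, $g(t):=F_2(\tilde\eta(t),\tilde w(t))$. The continuous embedding $\overline{X}_2 \hookrightarrow X_0$ together with $F_i \in \mathcal{L}(X_0;\mathbb{R})$ ensures that $t\mapsto(f(t),g(t))$ is continuous on $[0,T_0]$ and satisfies
\[
\|(f,g)\|_{[L^2(0,T_0)]^2} \leq T_0^{1/2}(\|F_1\|+\|F_2\|) \, \|(\tilde\eta,\tilde w)\|_{C([0,T_0];X_0)},
\]
so $\Gamma$ is well defined. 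Because \eqref{nhs1} is linear in its data, the difference $\Gamma(\tilde\eta_1,\tilde w_1) - \Gamma(\tilde\eta_2,\tilde w_2)$ solves \eqref{nhs1} with vanishing initial data and source terms and with boundary data $F_i$ evaluated on the difference of inputs. Applying \eqref{depend1} and the previous inequality then gives
\[
\|\Gamma(\tilde\eta_1,\tilde w_1) - \Gamma(\tilde\eta_2,\tilde w_2)\|_{C([0,T_0];\overline{X}_2)} \leq C\, C_T\, T_0^{1/2} (\|F_1\|+\|F_2\|) \, \|(\tilde\eta_1-\tilde\eta_2,\tilde w_1-\tilde w_2)\|_{C([0,T_0];\overline{X}_2)},
\]
with $C$ denoting the embedding constant $\overline{X}_2 \hookrightarrow X_0$. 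Selecting $T_0$ so that the prefactor is strictly less than $1$ makes $\Gamma$ a strict contraction, and the Banach fixed-point theorem produces a unique fixed point $(\eta,w)\in C([0,T_0];\overline{X}_2)$, which is the desired solution on $[0,T_0]$; the corresponding bound on $[0,T_0]$ is obtained by applying \eqref{depend1} to the fixed point itself and absorbing the now-small $L^2$ norm of $(f,g)$ into the left-hand side.

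To cover the full interval $[0,T]$, I would iterate this construction on successive windows of length $T_0$. Since the length $T_0$ depends only on $T$, $C_T$ and $\|F_i\|$, not on the state at the beginning of the window, only finitely many such windows are needed, and concatenating the local solutions produces $(\eta,w) \in C([0,T];\overline{X}_2)$; the global a priori bound follows by compounding the bound \eqref{depend1} across the $\lceil T/T_0 \rceil$ steps. I do not expect a deep obstacle here, since all the analytic content sits inside Lemma \ref{existentransposition1}: the only subtle point is that the nonlinear coupling through the boundary must be Lipschitz in a norm weaker than the $\overline{X}_2$-energy norm, which is precisely what the $T_0^{1/2}$ gain from $L^2$ versus $L^\infty$ in time provides.
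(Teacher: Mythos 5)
Your proposal is correct and follows essentially the same route as the paper: a Banach fixed-point argument on a short time window $[0,\beta]$ built on the open-loop estimate \eqref{depend1} of Lemma \ref{existentransposition1}, with the contraction coming from the $\beta^{1/2}$ gain in passing from the $C([0,\beta];\overline{X}_2)$ norm of the state to the $L^2(0,\beta)$ norm of the feedback, followed by iteration over finitely many windows since $\beta$ is independent of the data. The only cosmetic difference is that the paper restricts $\Gamma$ to a ball $B_R(0)$ before contracting, whereas you contract on the whole space, which is equally valid here because the map is affine.
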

\begin{proof}
Firstly, note that if $(\eta,w) \in C([0,T];\overline{X}_2)$, we have that 
\begin{equation*}
\|F_i(\eta,w)\|_{L^2(0,T)}^2=\int_0^T|F_i(\eta(\cdot, t),w(\cdot, t))|^2dt \leq CT\|F_i\|_{\LL(X_0;\mathbb{R})}^2 \|(\eta,w)\|^2_{C([0,T];\overline{X}_2)},
\end{equation*}
 then $F_i(\eta(\cdot,t),w(\cdot,t)) \in L^{2}(0,T)$, for $i=1,2$. 
 
 Let $0<\beta \leq T$ that will be determinate later. For each $(\eta_0,w_0) \in \overline{X}_2$, consider the map 
\begin{align*}
\begin{array}{l c l c}
\Gamma : & C([0,\beta];\overline{X}_2) & \longrightarrow & C([0,\beta];\overline{X}_2) \\
         & (\eta,w) & \longmapsto & \Gamma(\eta,w)=(u,v)   
\end{array}
\end{align*}
where, $(u,v)$ is the solution of the system \eqref{nhs1} with $f(t)=F_1(\eta(t),w(t))$ and $ g(t)=F_2(\eta(t),w(t))$. By Lemma \ref{existentransposition1}, the linear map $\Gamma$ is well defined. Furthermore, there exists a positive constant $C$, such that
\begin{align*}
\|\Gamma(\eta,w)\|_{C([0,\beta];\overline{X}_2)} \leq C_{\beta} (\|(\eta_0,w_0)\|_{\overline{X}_2}+\|(F_1(\eta,w),F_2(\eta,w))\|_{[L^2(0,\beta)]^2} + \|(h_1,h_2)\|_{L^2(0,T;X_{-1})} ).
\end{align*}
Then, 
\begin{multline*}
\|\Gamma(\eta,w)\|_{C([0,\beta];\overline{X}_2)} \leq C_T \left( \|(\eta_0,w_0)\|_{\overline{X}_2}+ \|(h_1,h_2)\|_{L^2(0,T;X_{-1})} \right) \\
+ C_T\beta^{\frac12} \left(\|F_1\|_{\LL(X_0;\mathbb{R})}^2+\|F_2\|_{\LL(X_0;\mathbb{R})}^2\right) \|(\eta,w)\|_{C([0,\beta];\overline{X}_2)}.
\end{multline*}
Let us define $$B_{R}(0):=\{ (\eta,w)\in C([0,\beta];\overline{X}_2); \|(\eta,w)\|_{C([0,\beta];\overline{X}_2)} \leq R\},$$ with $$R=2C_T\left( \|(\eta_0,w_0)\|_{\overline{X}_2}+ \|(h_1,h_2)\|_{L^2(0,T;X_{-1})} \right).$$ Choosing $\beta$ such that 
\begin{align*}
C_T\beta^{\frac12}\left\lbrace\|F_1\|_{\LL(X_0;\mathbb{R})}^2+\|F_2\|_{\LL(X_0;\mathbb{R})}^2\right\rbrace\leq \frac12,
\end{align*}
it implies that $\|\Gamma(\eta,w)\|_{C([0,\beta];\overline{X}_2)} \leq R$, for all $(\eta,w) \in B_{R}(0)$, i.e,  $\Gamma$ maps $B_R(0)$ into $B_R(0)$. Furthermore, note that 
\begin{align*}
\|\Gamma(\eta_1,w_1)-\Gamma(\eta_2,w_2)\|_{C([0,\beta];\overline{X}_2)} 
\leq & C_T\beta^{\frac12}\left(\|F_1\|_{\LL(X_0;\mathbb{R})}^2+\|F_2\|_{\LL(X_0;\mathbb{R})}^2\right)\|(\eta_1-\eta_2,w_1-w_2)\|_{C([0,\beta];\overline{X}_2)} \\
\leq & \frac12 \|(\eta_1-\eta_2,w_1-w_2)\|_{C([0,\beta];\overline{X}_2)}.
\end{align*}
Hence, $\Gamma: B_R(0) \longrightarrow B_R(0)$ is a contraction and, by Banach fixed point theorem, we obtain a unique $(\eta, w) \in  B_R(0)$, such that $\Gamma(\eta,w)=(\eta,w)$ and 
\begin{align*}
\|(\eta,w)\|_{C([0,\beta];\overline{X}_2)} \leq 2C_T\left( \|(\eta_0,w_0)\|_{\overline{X}_2}+ \|(h_1,h_2)\|_{L^2(0,T;X_{-1})} \right).
\end{align*}
Since the choice of  $\beta$ is independent of $(\eta_0, w_0)$, the standard continuation extension argument yields that the solution $(\eta,w)$ belongs to $C([0,\beta];\overline{X}_2)$, thus, the proof is complete.
\end{proof}

Now, we are able to prove the main result of this subsection.
\begin{proof}[\textbf{Proof of Theorem \ref{nonlinearexitence1}}]
Let $T>0$ and $\|(\eta_0,w_0)\|_{\overline{X}_2}<\rho$, where $\rho>0$ will be determined later. Note that for $(\eta,w) \in C([0,T];\overline{X}_2)$, there exists a positive constant $C_1$ such that 
\begin{align}
\|\eta w_x\|^2_{L^2(0,T; L^2(0,L))} &\leq \int_0^T \|\eta(t)\|^2_{L^\infty(0,L)} \|w_x(t)\|^2_{L^2(0,L)}dt\nonumber\\& \leq C_1' \int_0^T \|\eta(t)\|^2_{H^{2}(0,L)} \|w(t)\|^2_{H^{2}(0,L)}dt\label{eqn11}\\
& \leq C_1 T \|(\eta,w)\|^4_{C([0,T]; \overline{X}_2)}.\nonumber
\end{align}
This implies that for any $(\eta,w) \in C([0,T]; \overline{X}_2)$, we have that $$((\eta w)_x, ww_x)\in L^2(0,T;X_0) \subset L^2(0,T;X_{-1}). $$  Consider the following linear map 
\begin{align*}
\begin{array}{c c c c}
\Gamma : & C([0,T]; \overline{X}_2) & \longrightarrow & C([0,T]; \overline{X}_2)\\
         & (\eta,w) & \longmapsto & \Gamma(\eta,w)=(\overline{\eta},\overline{w}),   
\end{array}
\end{align*}
where $(\overline{\eta},\overline{w})$ is the solution of the system \eqref{nhs1} with $(h_1,h_2):=(-(\eta w)_x,- ww_x)$ in $L^2(0,T;X_{-1})$, $f(t)=F_1(\overline{\eta}(t),\overline{w}(t))$ and $g(t)=F_2(\overline{\eta}(t),\overline{w}(t))$. 

\vspace{0.2cm}
\noindent \textit{Claim.} The map $\Gamma$ is well-defined, maps $B_R(0)$ into it self and is a contraction in a ball. 
\vspace{0.2cm}

Indeed, firstly note that Lemma \eqref{exitencefeedback1} ensures that $\Gamma$ is well-defined, moreover using Lemma \ref{existentransposition1}, there exists a positive constant $C_T$, such that
\begin{align*}
\|\Gamma(\eta,w)\|_{ C([0,T]; \overline{X}_2} \leq C_{T} (\|(\eta_0,w_0)\|_{\overline{X}_2}+\|(\eta w)_x\|_{L^2(0,T; L^2(0,L))}+ \|w w_x\|_{L^2(0,T; L^2(0,L))}). 
\end{align*}
Then, \eqref{eqn11} yields that
\begin{align}\label{Gamma}
\|\Gamma(\eta,w)\|_{ C([0,T]; \overline{X}_2} \leq C_{T} \|(\eta_0,w_0)\|_{\overline{X}_2} + 3T^{1/2}C_TC_1 \|(\eta,w)\|^2_{C([0,T];\overline{X}_2)}. 
\end{align}
Consider the ball  $B_R(0) = \left\lbrace(\eta,w) \in C([0,T]; \overline{X}_2 ): \|(\eta,w)\|_{C([0,T]; \overline{X}_2)}\leq R\right\rbrace$, where $$R= 2C_T\|(\eta_0,w_0)\|_{\overline{X}_2}.$$ From the estimate \eqref{Gamma} we get that
\begin{align*}
\|\Gamma(\eta,w)\|_{C([0,T];\overline{X}_2)} &\leq \frac{R}{2}+ 3T^{1/2}C_TC_1R^2 <\frac{R}{2}+ 6T^{1/2}C_T^2C_1\rho R, \quad \forall (\eta,w) \in B_R(0).
\end{align*}
Consequently, if we choose $\rho>0$ such that $24T^{1/2}C_T^2C_1\rho <1$, $\Gamma$ maps the ball $B_R(0)$ into itself. Finally, note that 
\begin{align*}
\|\Gamma&(\eta_1,w_1)-\Gamma(\eta_2,w_2)\|_{{C([0,T];\overline{X}_2)}}  \leq C_T \|\left( (\eta_1w_1)_x-(\eta_2w_2)_x, w_1w_{1,x}-w_2w_{2,x}\right)\|_{L^1(0,T;X_0)}\\
\leq &T^{1/2}C_TC_1((\|w_1\|_{C([0,T];H^{2}(0,L))}+ \|w_2\|_{C([0,T];H^{2}(0,L))} )\|\eta_1-\eta_2\|_{C([0,T];H^{2}(0,L))} \\
& \quad +T^{1/2}C_TC_1((\|\eta_1\|_{C([0,T];H^{2}(0,L))}+ \|\eta_2\|_{C([0,T];H^{2}(0,L))}+\|w_1\|_{C([0,T];H^{2}(0,L)) }  \\
& \qquad \qquad \qquad \qquad \qquad \qquad \qquad \qquad  +\|w_2\|_{C([0,T];H^{2}(0,L)) } )\|w_1-w_2\|_{C([0,T];H^{2}(0,L))} \\
\leq &12T^{1/2}C_T^2C_1\rho\|(\eta_1-\eta_2,w_1-w_2)\|_{C([0,T];\overline{X}_2)}. 
\end{align*}
Therefore, 
\begin{align*}
\|\Gamma(\eta_1,w_1)-\Gamma(\eta_2,w_2)&\|_{C([0,T];\overline{X}_2)} \leq \frac{1}{2}\|(\eta_1-\eta_2,w_1-w_2)\|_{{C([0,T];\overline{X}_2)}}, \quad \forall (\eta,w) \in B_T(0).
\end{align*}
Hence, $\Gamma: B_R(0) \longrightarrow B_R(0)$ is a contraction and the claim is archived. 

Thanks to Banach fixed point theorem, we obtain a unique $(\eta, w) \in  B_R$, such that $\Gamma(\eta,w)=(\eta,w)$ and 
\begin{align*}
\|(\eta,w)\|_{C([0,T];\overline{X}_2)} \leq 2C_{T} \|(\eta_0,w_0)\|_{\overline{X}_2}.
\end{align*}
Thus, the proof is complete.
\end{proof}


\section{Well-posedness for the stationary system }\label{st_p}
In this section we are interested to show the well-posedness of the stationary system \eqref{esta1'}-\eqref{esta1.1'}. For a better understanding, we recall the definitions of the operators $K$ and $S$:
\begin{equation}\label{deftransf}
(Kv)(x):=\int_0^L k(x,y)v(y)dy \quad \text{and} \quad (Sv)(x):=\int_0^L s(x,y)v(y)dy, \quad \text{for all $v \in L^2(0,L)$,}
\end{equation}
where $(k,s)$ is the solution of stationary problem
\begin{equation}\label{esta1}
\begin{cases}
k_{yyy}+k_y+k_{xxx}+k_x+\lambda s=0, & \text{in $(0,L)\times (0,L)$}, \\
s_{yyy}+s_y+s_{xxx}+s_x+\lambda k=\lambda \delta(x-y), & \text{in $(0,L)\times (0,L)$},
\end{cases}
\end{equation}
with boundary condition 
\begin{equation}\label{esta1.1}
\left\lbrace\begin{array}{l l }
k(x,0)=k(x,L)=k(0,y)=k(L,y)=k_y(x,0)=k_y(x,L)=0, &   \text{on $(0,L)$}, \\
s(x,0)=s(x,L)=s(0,y)=s(L,y)=s_y(x,0)=s_y(x,L)=0, &   \text{on $(0,L)$}, 
\end{array}\right.
\end{equation}
where $\lambda \in \R\setminus \{0\}$ and $\delta(x-y)$ denotes the Dirac measure on the diagonal of the square $[0,L]\times [0,L]$.


\subsection{Well-posedness: Stationary system}\label{sec21}
In this subsection we study the well-posedness of the system \eqref{esta1}-\eqref{esta1.1}. In this direction, we will define the solution by transposition. 

Let us consider the set 
\begin{equation*}
\EE := \left\lbrace \rho \in C^{\infty}([0,L]\times[0,L]): \rho(0,y)= \rho(L,y)=\rho(x,0)=\rho(x,L)=\rho_x(0,y)=\rho_x(L,y)=0\right\rbrace
\end{equation*}
and $\GG$ be the set given by
\begin{align*}
\GG:= \left\lbrace k \in H_0^1((0,L)\times(0,L)) : \begin{array}{c}
(x \in (0,L) \mapsto k_x(x,\cdot) \in  L^2(0,L)) \in C([0,L];L^2(0,L)), \\
(y \in (0,L) \mapsto k_y(\cdot,y) \in  L^2(0,L)) \in C([0,L];L^2(0,L)), \\
k_y(\cdot,0)=k_y(\cdot,L)=0.
\end{array} \right\rbrace.
\end{align*}
We call $(k,s) \in \GG \times \GG$ a solution to \eqref{esta1}-\eqref{esta1.1} if 
\begin{multline}\label{e1rapid}
\int_0^L\int_0^L \left[ \rho_{yyy}(x,y)+\rho_y(x,y)+\rho_{xxx}(x,y)+\rho_x(x,y)\right]k(x,y)dxdy  \\ 
-\lambda \int_0^L\int_0^L\rho(x,y)s(x,y)dxdy=0, \quad \text{for every $\rho \in \EE$}
\end{multline}
and 
\begin{multline}\label{e2rapid}
\int_0^L\int_0^L \left[ \rho_{yyy}(x,y)+\rho_y(x,y)+\rho_{xxx}(x,y)+\rho_x(x,y)\right]s(x,y)dxdy  \\
-\lambda \int_0^L\int_0^L\rho(x,y)k(x,y)dxdy +\lambda \int_0^L\rho(x,x)dx=0 \quad \text{for every $\rho \in \EE$}.
\end{multline}

We can enunciate the well-posedness result as follows.
\begin{thm}\label{mod_coron}
For any $\lambda \in \R\setminus \{0\}$, system \eqref{esta1}-\eqref{esta1.1} has a unique solution in $\GG \times \GG$.
\end{thm}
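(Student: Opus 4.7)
My plan is to reduce the coupled system for $(k,s)$ to two independent scalar equations via a change of unknowns, and then solve each by a transposition argument. Setting $u := k+s$ and $v := k-s$, adding and subtracting the weak identities \eqref{e1rapid} and \eqref{e2rapid} shows that $(k,s) \in \GG \times \GG$ solves the coupled system if and only if
\begin{align*}
\int_0^L\!\!\int_0^L (\mathcal{P}\rho - \lambda \rho)\, u(x,y) \, dx\, dy &= -\lambda \int_0^L \rho(x,x)\, dx, \\
\int_0^L\!\!\int_0^L (\mathcal{P}\rho + \lambda \rho)\, v(x,y)\, dx\, dy &= +\lambda \int_0^L \rho(x,x)\, dx,
\end{align*}
hold for every $\rho \in \EE$, where $\mathcal{P}\rho := \rho_{xxx} + \rho_x + \rho_{yyy} + \rho_y$. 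The two problems have the same structure up to the sign of $\lambda$, so it suffices to handle one of them for any $\lambda \in \R\setminus\{0\}$.

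Each decoupled problem is of transposition type, and existence and uniqueness of $u$ would follow from the Riesz representation theorem once one knows that the adjoint problem
$$\mathcal{P}\tilde\rho \mp \lambda \tilde\rho = f \quad \text{in } (0,L)^2, \qquad \tilde\rho \text{ satisfying the BCs defining } \EE,$$
has a unique solution $\tilde\rho$ in a suitable Hilbert space $\HH$ for every $f \in L^2((0,L)^2)$, together with a trace estimate $|\int_0^L \tilde\rho(x,x)\,dx| \leq C\|f\|_{L^2}$ coming from the embedding $H^s((0,L)^2) \hookrightarrow L^1(\{x=y\})$ for any $s > 1/2$. I would establish this adjoint well-posedness by a Faedo--Galerkin scheme in a tensor-product basis $\{\phi_n(x)\psi_m(y)\}$ of $L^2((0,L)^2)$ whose factors respect the boundary conditions of $\EE$: take $\phi_n(x) = \sqrt{2/L}\sin(n\pi x/L)$ in $x$, and $\psi_m$ the eigenfunctions of a fourth-order self-adjoint problem in $y$ encoding the four conditions $\psi_m(0) = \psi_m(L) = \psi_m'(0) = \psi_m'(L) = 0$. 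The key energy estimate comes from testing the Galerkin approximation against $\tilde\rho$ itself: the odd-order terms in $\mathcal{P}$ integrate by parts to boundary expressions that vanish under the BCs of $\EE$, while the $\pm\lambda\|\tilde\rho\|_{L^2}^2$ term gives coercivity in one sign; for the opposite sign, the Fredholm alternative together with injectivity of the homogeneous problem closes the argument.

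Finally, I would set $k := (u+v)/2$ and $s := (u-v)/2$ and verify that they belong to $\GG$ by transferring the regularity estimates from the Galerkin scheme; the choice of a $y$-basis satisfying $\psi_m'(0)=\psi_m'(L)=0$ is precisely what guarantees $k_y(\cdot,0)=k_y(\cdot,L)=0$ (and similarly for $s$), while the other regularity conditions in the definition of $\GG$ are inherited from the $H^1_0$-type bounds produced by the Galerkin scheme. Uniqueness of $(k,s) \in \GG \times \GG$ is then immediate, since any two solutions produce the same $u$ and $v$, which are themselves unique by Riesz representation.

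The principal obstacle is establishing the well-posedness of the adjoint PDE: the operator $\mathcal{P}$ is non-self-adjoint and the boundary conditions defining $\EE$ are asymmetric (four in $y$, only two in $x$), so neither standard elliptic theory nor a classical variational principle applies directly. The crux is a careful integration-by-parts identity showing that all the higher-order derivatives in $\mathcal{P}$ contribute only boundary terms vanishing on $\EE$, leaving a coercive zeroth-order term controllable via the $\pm\lambda$ factor (or, in the non-coercive sign, handled by Fredholm alternative and injectivity of the homogeneous adjoint problem).
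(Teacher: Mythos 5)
Your reduction of the coupled system to two scalar problems via $u=k+s$, $v=k-s$ is exactly the decomposition used in the paper; the difference is what happens afterwards. The paper observes that each decoupled problem is precisely the kernel equation already solved by Coron and L\"u for the scalar KdV equation and simply invokes \cite[Lemma 2.1]{coron2014} for existence, uniqueness and membership in $\GG$, so its proof amounts to the algebraic reduction, a verification of the weak identities, and a citation. You instead attempt to reprove the scalar well-posedness from scratch, and that is where the argument breaks down.

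The pivotal claim of your Galerkin scheme --- that ``the odd-order terms in $\mathcal{P}$ integrate by parts to boundary expressions that vanish under the BCs of $\EE$'' --- is false. The set $\EE$ imposes clamped conditions in $x$ (namely $\rho=\rho_x=0$ at $x=0,L$) but only Dirichlet conditions in $y$, so while $\int_0^L\rho_{xxx}\rho\,dx=0$, one has
\[
\int_0^L \rho_{yyy}\,\rho\,dy \;=\; -\tfrac12\left(\rho_y(x,L)^2-\rho_y(x,0)^2\right),
\qquad\text{hence}\qquad
\iint (\mathcal{P}\rho)\,\rho \;=\; \tfrac12\int_0^L\left(\rho_y(x,0)^2-\rho_y(x,L)^2\right)dx,
\]
a sign-indefinite quantity. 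Consequently neither sign of $\lambda$ yields coercivity, the a priori bound needed to pass to the limit in the Galerkin scheme (and to obtain the trace estimate $\left|\int_0^L\tilde\rho(x,x)\,dx\right|\le C\|f\|_{L^2}$) is unavailable, and the Fredholm alternative cannot be set up, since the compactness it requires rests on exactly the estimate that is missing; the injectivity of the homogeneous adjoint problem is also asserted without proof and is itself a nontrivial unique-continuation statement. A smaller but real inconsistency: your tensor basis places the four clamped conditions in $y$ and only Dirichlet in $x$, which matches the solution space $\GG$ rather than the test space $\EE$ in which you pose the adjoint problem. Finally, even granting adjoint well-posedness, transposition only delivers $u\in L^2$; the $H^1_0$ regularity, the continuity of the derivative traces, and the Neumann conditions $u_y(\cdot,0)=u_y(\cdot,L)=0$ required for $u\in\GG$ would still need a separate argument. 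The honest route is the paper's: reduce to the scalar kernel lemma of \cite{coron2014} and cite it, or else reproduce that lemma's genuinely delicate construction rather than a naive energy method.
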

\begin{proof} Consider $v\in\GG$ solution of the equation
\begin{equation}\label{coron1}
\begin{cases}
\overline{k}_{yyy}+\overline{k}_y+\overline{k}_{xxx}+\overline{k}_x+\lambda \overline{k}= \lambda \delta (x-y) & \text{in $(0,L)\times(0,L)$}, \\
\overline{k}(x,0)=\overline{k}(x,L)=0, & \text{on $(0,L)$}, \\
\overline{k}_y(x,0)=\overline{k}_y(x,L)=0, & \text{on $(0,L)$}, \\
\overline{k}(0,y)=\overline{k}(L,y)=0, & \text{on $(0,L)$}
\end{cases}
\end{equation}
and $u\in\GG$ solution of the equation
\begin{equation}\label{coron1'}
\begin{cases}
\overline{k}_{yyy}+\overline{k}_y+\overline{k}_{xxx}+\overline{k}_x-\lambda \overline{k}= -\lambda \delta (x-y) & \text{in $(0,L)\times(0,L)$}, \\
\overline{k}(x,0)=\overline{k}(x,L)=0, & \text{on $(0,L)$}, \\
\overline{k}_y(x,0)=\overline{k}_y(x,L)=0, & \text{on $(0,L)$}, \\
\overline{k}(0,y)=\overline{k}(L,y)=0, & \text{on $(0,L)$}.
\end{cases}
\end{equation}
The existence of such $v$ and $u$ in $\GG$ is provided by \cite[Lemma 2.1]{coron2014}.  Then, in this case, we easily see that, if we define  
\begin{equation*}
k:=\frac{v+u}{2}\in \GG \quad \text{and} \quad s:=\frac{v-u}{2}\in\GG,
\end{equation*}
then $k$ and $s$ satisfies the boundary conditions \eqref{esta1.1}. 

Now we prove that $(k,s)\in\GG\times\GG$ is solution of the system \eqref{esta1}-\eqref{esta1.1} in the sense given in the beginning of this subsection. Let $\rho \in \EE$. Since $v$ and $u$ are solutions of \eqref{coron1} and \eqref{coron1'}, respectively, it follows that 
\begin{equation*}
\int_0^L\int_0^L \left[ \rho_{yyy}(x,y)+\rho_y(x,y)+\rho_{xxx}(x,y)+\rho_x(x,y)-\lambda\rho(x,y)\right]v(x,y)dxdy  +\lambda \int_0^L\rho(x,x)dx=0
\end{equation*}
and 
\begin{equation*}
\int_0^L\int_0^L \left[ \rho_{yyy}(x,y)+\rho_y(x,y)+\rho_{xxx}(x,y)+\rho_x(x,y)+\lambda\rho(x,y)\right]u(x,y)dxdy  -\lambda \int_0^L\rho(x,x)dx=0.
\end{equation*}
Thus,  integrating by parts, we obtain
\begin{align*}
\int_0^L\int_0^L&\left\lbrace \left[ \rho_{yyy}(x,y)+\rho_y(x,y)+\rho_{xxx}(x,y)+\rho_x(x,y)\right]k(x,y)  -\lambda \rho(x,y)s(x,y)\right\rbrace dxdy \\ 
=&\frac12 \int_0^L\int_0^L \left[ \rho_{yyy}(x,y)+\rho_y(x,y)+\rho_{xxx}(x,y)+\rho_x(x,y)\right](v(x,y)+u(x,y))dxdy \\
&-\frac12\lambda \int_0^L\int_0^L\rho(x,y)(v(x,y)-u(x,y))dxdy 
\\
=& \frac12 \int_0^L\int_0^L \left[ \rho_{yyy}(x,y)+\rho_y(x,y)+\rho_{xxx}(x,y)+\rho_x(x,y) -\lambda \rho(x,x)\right]v(x,y)dxdy \\
&+ \frac12 \int_0^L\int_0^L \left[ \rho_{yyy}(x,y)+\rho_y(x,y)+\rho_{xxx}(x,y)+\rho_x(x,y) +\lambda \rho(x,x)\right]u(x,y)dxdy \\
=& 0
\end{align*}
and 
\begin{align*}
\int_0^L\int_0^L &\left\lbrace\left[ \rho_{yyy}(x,y)+\rho_y(x,y)+\rho_{xxx}(x,y)+\rho_x(x,y)\right]s(x,y) -\lambda \rho(x,y)k(x,y)\right\rbrace dxdy\\
=&\frac12\int_0^L\int_0^L \left[ \rho_{yyy}(x,y)+\rho_y(x,y)+\rho_{xxx}(x,y)+\rho_x(x,y)\right](v(x,y)-u(x,y))dxdy \\
&-\frac{\lambda}{2}\int_0^L\int_0^L \rho(x,y)(v(x,y)+u(x,y))dxdy\\
=&\frac12\int_0^L\int_0^L \left[ \rho_{yyy}(x,y)+\rho_y(x,y)+\rho_{xxx}(x,y)+\rho_x(x,y)-\lambda\rho(x,y)\right]v(x,y)dxdy \\
&-\frac12\int_0^L\int_0^L \left[ \rho_{yyy}(x,y)+\rho_y(x,y)+\rho_{xxx}(x,y)+\rho_x(x,y)+\lambda\rho(x,x)\right]u(x,y)dxdy \\
=&-\lambda \int_0^L\rho(x,x)dx.
\end{align*}
Consequently \eqref{e1rapid} and \eqref{e2rapid} are satisfied.

Now, we prove that there exists only one solution for the system \eqref{esta1}-\eqref{esta1.1}. In fact, suppose that $(k_1,s_1)$ and $(k_2,s_2)$ are solutions of the system \eqref{esta1}-\eqref{esta1.1} and consider $k:=k_1-k_2$ and $s:=s_1-s_2$. Then, $(k,s)$ is solution of 
\begin{equation*}
\begin{cases}
k_{yyy}+k_y+k_{xxx}+k_x+\lambda s=0, & \text{in $(0,L)\times (0,L)$}, \\
s_{yyy}+s_y+s_{xxx}+s_x+\lambda k=0, & \text{in $(0,L)\times (0,L)$}, 
\end{cases}
\end{equation*}
satisfying the boundary condition \eqref{esta1.1}. Note that $k+s$ solves the problem  
\begin{equation*}
\begin{cases}
\hat{k}_{yyy}+\hat{k}_y+\hat{k}_{xxx}+\hat{k}_x+\lambda \hat{k}= 0 & \text{in $(0,L)\times(0,L)$}, \\
\hat{k}(x,0)=\hat{k}(x,L)=0, & \text{on $(0,L)$}, \\
\hat{k}_y(x,0)=\hat{k}_y(x,L)=0, & \text{on $(0,L)$}, \\
\hat{k}(0,y)=\hat{k}(L,y)=0, & \text{on $(0,L)$}
\end{cases}
\end{equation*}
and $k-s$ solves 
\begin{equation*}
\begin{cases}
\hat{k}_{yyy}+\hat{k}_y+\hat{k}_{xxx}+\hat{k}_x-\lambda \hat{k}= 0 & \text{in $(0,L)\times(0,L)$}, \\
\hat{k}(x,0)=\hat{k}(x,L)=0, & \text{on $(0,L)$}, \\
\hat{k}_y(x,0)=\hat{k}_y(x,L)=0, & \text{on $(0,L)$}, \\
\hat{k}(0,y)=\hat{k}(L,y)=0, & \text{on $(0,L)$}.
\end{cases}
\end{equation*}
By uniqueness given in \cite[Lemma 2.1]{coron2014}, we have that $k+s=k-s=0$. Then, $k_1=k_2$ and $s_1=s_2$, and the proof is archived.
\end{proof}

\begin{remark}\label{remarkinverse}
Let $(k,s)$ be solution of the system \eqref{esta1}-\eqref{esta1.1}. Thus, it is clear that $k+s$ is solution of the system 
\begin{equation*}
\begin{cases}
\overline{k}_{yyy}+\overline{k}_y+\overline{k}_{xxx}+\overline{k}_x+\lambda \overline{k}= \lambda \delta (x-y) & \text{in $(0,L)\times(0,L)$}, \\
\overline{k}(x,0)=\overline{k}(x,L)=0, & \text{on $(0,L)$}, \\
\overline{k}_y(x,0)=\overline{k}_y(x,L)=0, & \text{on $(0,L)$}, \\
\overline{k}(0,y)=\overline{k}(L,y)=0, & \text{on $(0,L)$}.
\end{cases}
\end{equation*}
Then, from \cite[Lemma 3.1]{coron2014}, we have that $(I- (K+S))^{-1}$ exists and it is a continuous operator in $L^2(0,L)$. Similarly, we obtain that $(I- (K-S))^{-1}$ also belongs to $\LL(L^2(0,L))$, since $k-s$ is solution of the following system
\begin{equation*}
\begin{cases}
\overline{k}_{yyy}+\overline{k}_y+\overline{k}_{xxx}+\overline{k} -\lambda \overline{k}=- \lambda \delta (x-y) & \text{in $(0,L)\times(0,L)$}, \\
\overline{k}(x,0)=\overline{k}(x,L)=0, & \text{on $(0,L)$}, \\
\overline{k}_y(x,0)=\overline{k}_y(x,L)=0, & \text{on $(0,L)$}, \\
\overline{k}(0,y)=\overline{k}(L,y)=0, & \text{on $(0,L)$}.
\end{cases}
\end{equation*}
\end{remark}


\section{Rapid stabilization for the nonlinear system}\label{rs_p}
In this section we will establish the local rapid exponential stabilization for the system \eqref{n1'''}-\eqref{n1.1''} with the feedback laws
$f(t):=F_1(\eta(\cdot,t),w(\cdot,t))$ and $g(t):= F_2(\eta(\cdot,t),w(\cdot,t))$, with $F_i$ belongs to $\LL(X_0;\mathbb{R})$, $i=1,2$, defined by 
\begin{equation}\label{feedback1}
\begin{cases}
F_1(u,v) =   \displaystyle\int_0^L \left[ k_x(0,y)u(y)+s_x(0,y)v(y)\right]dy, & \forall (u,v) \in X_0, \\
F_2(u,v) = \displaystyle\int_0^L \left[ k_x(L,y)v(y)+s_x(L,y)u(y)\right]dy, &\forall (u,v) \in X_0
\end{cases}
\end{equation} 
in the energy space \textit{via} the backstepping modified method. 
\begin{proof}[Proof of Theorem \ref{main_int}]
Let $T>0$ and $\rho:=\rho_1>0$ such that $\|(\eta_0,w_0)\|_{\overline{X}_2}<\rho_1$, which will be given later. Consider the following system 
\begin{equation}\label{sss1}
\begin{cases}
\eta_{t} + w_{x}+w_{xxx}+(\eta w)_x= 0, & \text{in} \,\, (0,L)\times (0,T),\\
w_{t} +\eta_{x} +\eta_{xxx} + w w_{x} =0,  & \text{in} \,\, (0,L)\times (0,T), \\
\eta(0,t)=\eta(L,t)=0,& \text{on} \,\, (0,T),\\
\eta_{x}(0,t)=\displaystyle\int_0^L \left[ k_x(0,y)\eta(y,t)+s_x(0,y)w(y,t)\right]dy, & \text{on} \,\, (0,T),\\
w(0,t)=w(L,t)=0,&\text{on} \,\,  (0,T),\\w_x(L,t)=\displaystyle\int_0^L \left[ k_x(L,y)w(y,t)+s_x(L,y)\eta(y,t)\right]dy& \text{on} \,\, (0,T),\\
\eta(x,0)= \eta_0(x), \quad w(x,0)=  w_0(x), & \text{on} \,\, (0,L).
\end{cases}
\end{equation}
By Theorem \ref{nonlinearexitence1}, there exist $\rho_T$ such that the system \eqref{sss1} admits a unique solution $(\eta,w) \in C([0,T];\overline{X}_2)$ provided that $\|(\eta_0,w_0)\|_{\overline{X}_2}<\rho_T$. Moreover, there is a positive constant $C_T$ such that 
\begin{align}\label{eq23}
\|(\eta,w)\|_{C([0,T]; \overline{X}_2)} \leq C_T \|(\eta_0,w_0)\|_{\overline{X}_2}.
\end{align}
On the other hand, consider the transformations
\begin{equation}\label{trans}
u := (I-K)\eta-S w \quad \text{and} \quad v := (I-K)w-S \eta,
\end{equation}
where the operators $K$ and $S$ are given by \eqref{deftransf}. From the boundary conditions of \eqref{esta1.1} and \eqref{sss1}, we have that
\begin{align*}
u_{t}(x,t)=&\eta_{t}(x,t)-\int_0^Lk(x,y)\eta_{t}(y,t)dy-\int_0^Ls(x,y)w_{t}(y,t)dy \\
=&\eta_{t}(x,t)+\int_0^Lk(x,y)[w_{y}(y,t)+w_{yyy}(y,t)+(\eta w)_y]dy \\
&+ \int_0^Ls(x,y)[\eta_{y}(y,t)+\eta_{yyy}(y,t)+w w_{y}]dy \\
=&\eta_{t}(x,t)-\int_0^Lw(y,t)[k_{y}(x,y)+k_{yyy}(x,y)]dy-\int_0^L\eta(x,y)[s_{y}(x,y)+s_{yyy}(x,y)]dy \\
&- \int_0^Lk_{y}(x,y)\eta(y,t)w(y,t)dy-\frac12\int_0^Ls_{y}(x,y)w^2(y,t)dy,\\
u_{x}(x,t)=&u_{x}(x,t)=\eta_{x}(x,t)-\int_0^Lk_x(x,y)\eta(y,t)dy-\int_0^Ls_x(x,y)w(y,t)dy
\end{align*}
and
$$
u_{xxx}(x,t)=\eta_{xxx}(x,t)-\int_0^Lk_{xxx}(x,y)\eta(y,t)dy-\int_0^Ls_{xxx}(x,y)w(y,t)dy. 
$$
Similarly, we obtain 
\begin{align*}
v_{t}(x,t)=&w_{t}(x,t)-\int_0^L\eta(y,t)[k_{y}(x,y)+k_{yyy}(x,y)]dy-\int_0^Lw(x,y)[s_{y}(x,y)+s_{yyy}(x,y)]dy \\
&- \int_0^Ls_{y}(x,y)\eta(y,t)w(y,t)dy-\frac12\int_0^Lk_{y}(x,y)w^2(y,t)dy, \\
v_{x}(x,t)=&w_{x}(x,t)-\int_0^Lk_x(x,y)w(y,t)dy-\int_0^Ls_x(x,y)\eta(y,t)dy, 
\end{align*}
and
$$
v_{xxx}(x,t)=w_{xxx}(x,t)-\int_0^Lk_{xxx}(x,y)w(y,t)dy-\int_0^Ls_{xxx}(x,y)\eta(y,t)dy. 
$$
Thus, for a given $\lambda\in\R\setminus\{0\}$, it follows that
\begin{align*}
u_{t}(x,t)&+v_{x}(x,t)+v_{xxx}(x,t)+\lambda u(x,t) =[\eta_{t}(x,t)+w_{x}(x,t)+w_{xxx}(x,t)] \\
&-\int_0^Lw(y,t)[k_y(x,y)+k_{yyy}(x,y)+k_x(x,y)+k_{xxx}(x,y)+\lambda s(x,y)]dy \\
& -\int_0^L\eta(y,t)[s_y(x,y)+s_{yyy}(x,y)+s_x(x,y)+s_{xxx}(x,y)+\lambda k(x,y)-\lambda\delta(x-y)]dy \\
&- \int_0^Lk_{y}(x,y)\eta(y,t)w(y,t)dy-\frac12\int_0^Ls_{y}(x,y)w^2(y,t)dy \\
&=-(\eta w)_x- \int_0^Lk_{y}(x,y)\eta(y,t)w(y,t)dy-\frac12\int_0^Ls_{y}(x,y)w^2(y,t)dy
\end{align*}
and 
\begin{align*}
v_{t}(x,t)&+u_{x}(x,t)+u_{xxx}(x,t)+\lambda v(x,t) =[w_{t}(x,t)+\eta_{x}(x,t)+\eta_{xxx}(x,t)] \\
&-\int_0^L\eta(y,t)[k_y(x,y)+k_{yyy}(x,y)+k_x(x,y)+k_{xxx}(x,y)+\lambda s(x,y)]dy \\
& -\int_0^Lw(y,t)[s_y(x,y)+s_{yyy}(x,y)+s_x(x,y)+s_{xxx}(x,y)+\lambda k(x,y)-\lambda\delta(x-y)]dy \\
&- \int_0^Ls_{y}(x,y)\eta(y,t)w(y,t)dy-\frac12\int_0^Lk_{y}(x,y)w^2(y,t)dy \\
&=-ww_{x}- \int_0^Ls_{y}(x,y)\eta(y,t)w(y,t)dy-\frac12\int_0^Lk_{y}(x,y)w^2(y,t)dy.
\end{align*}
Hence, by using the system \eqref{esta1} and the boundary conditions \eqref{esta1.1}, we deduce that $(u,v)$ solves the system
\begin{equation}\label{ee1}
\begin{cases}
u_{t}(x,t)+v_{x}(x,t)+v_{xxx}(x,t)+\lambda u(x,t) =\sum_{i=1}^3\Psi_i(x,t), & \text{in $(0,L)\times[0,T]$}, \\
v_{t}(x,t)+u_{x}(x,t)+u_{xxx}(x,t)+\lambda v(x,t) =\sum_{i=1}^3\Phi_i(x,t), & \text{in $(0,L)\times[0,T]$}, \\
u(0,t)=u(L,t)=u_{x}(0,t)=0, & \text{on $[0,T]$},\\
v(0,t)=v(L,t)=v_{x}(L,t)=0, & \text{on $[0,T]$},
\end{cases}
\end{equation}
where 
\begin{equation*}
\begin{array}{l}
\Psi_1(x,t)= -(\eta(x,t) w(x,t))_x, \\
\displaystyle\Psi_2(x,t)=   - \int_0^Lk_{y}(x,y)\eta(y,t)w(y,t)dy, \\
\displaystyle\Psi_3(x,t) =-\frac12\int_0^Ls_{y}(x,y)w^2(y,t)dy
\end{array} 
\end{equation*}
and
\begin{equation*}
\begin{array}{l}
\Phi_1(x,t)= -w(x,t) w_{x}(x,t), \\
\displaystyle\Phi_2(x,t)=   - \int_0^Ls_{y}(x,y)\eta(y,t)w(y,t)dy, \\
\displaystyle\Phi_3(x,t) =-\frac12\int_0^Lk_{y}(x,y)w^2(y,t)dy.
\end{array}
\end{equation*}
Multiplying first equation of \eqref{ee1} by $u$, the second one by $v$ and integrating by parts on $(0,L)$, we get
\begin{multline}
\frac12\frac{d}{dt}\int_{0}^L(u^2(x,t)+v^2(x,t))dx \leq -\lambda \int_{0}^L(u^2(x,t)+v^2(x,t))dx \\ \label{eq16}
 +  \sum_{i=1}^3\left\lbrace \int_0^L \Psi_i (x,t)u(x,t)dx + \int_0^L \Phi_i (x,t)v(x,t)dx\right\rbrace.
\end{multline}

Now we will estimate the sum on the right hand side of \eqref{eq16}. First, note that, 

\begin{align*}
\int_0^L &\Psi_1(x) u(x)dx= - \int_0^L (\eta(x) w(x))_x\eta(x) dx + \int_0 ^L (\eta(x) w(x))_x \left( \int_0^L k(x,y)\eta_1(y)dy\right)dx \\
&+ \int_0 ^L (\eta(x) w(x))_x \left( \int_0^L s(x,y)w(y)dy\right)dx \\ 
= &  \int_0^L \eta(x) w(x)\eta_{x}(x) dx  - \int_0 ^L (\eta(x) w(x)) \left( \int_0^L k_x(x,y)\eta(y)dy\right)dx \\
&- \int_0 ^L (\eta(x) w(x)) \left( \int_0^L s_x(x,y)w(y)dy\right)dx \\ 
\leq & -\frac12 \int_0^L \eta^2(x) w_{x}(x) dx  + \|\eta(t)\|_{L^2(0,L)}\int_0 ^L |\eta(x)|| w(x)| \|k_x(x)\|_{L^2(0,L)}dx \\
&+ \|w(t)\|_{L^2(0,L)} \int_0 ^L |\eta(x)|| w(x) |\|s_x(x)\|_{L^2(0,L)}dx \\ 
\leq & \frac12\|w_{x}(t)\|_{L^{\infty}(0,L)} \|\eta(t)\|_{L^2(0,L)} ^2  + \sup_{x \in (0,L)} \|k_x(x)\|_{L^2(0,L)} \|\eta(t)\|^2_{L^2(0,L)} \|w(t)\|_{L^2(0,L)}\\
&+\sup_{x \in (0,L)} \|s_x(x)\|_{L^2(0,L)} \|w(t)\|^2_{L^2(0,L)} \|\eta(t)\|_{L^2(0,L)}.
\end{align*}
Therefore, 
\begin{align}\label{eq17}
\int_0^L \Psi_1(x,t)u(x,t)dx \leq K_0 \|(\eta(t),w(t))\|_{\overline{X}_2}\|(\eta(t),w(t))\|^2_{X_0}+ K_1\|(\eta(t),w(t))\|^3_{X_0},
\end{align}
where $K_0$ is the constant of the embedding $H^1(0,L)\subset L^{\infty}(0,L)$ and
\begin{equation*}
K_1 =  \sup_{x \in (0,L)} \|k_x(x)\|_{L^2(0,L)} + \sup_{x \in (0,L)} \|s_x(x)\|_{L^2(0,L)}.
\end{equation*}

In the same way, we have, 

\begin{align*}
\int_0^L &\Psi_2(x) u(x)dx= - \int_0^L \left( \int_0^L k_y(x,y)\eta(y)w(y)dy\right)\eta(x)dx  \\
&+ \int_0 ^L \left( \int_0^L k_y(x,y)\eta(y)w(y)dy\right) \left[  \int_0^L k(x,z)\eta(z)dz + \int_0^L s(x,z)w(z)dz \right] dx \\
=& - \int_0^L \eta(y)w(y)\left( \int_0^L k_y(x,y)\eta(x)dx\right)dy  \\
&+ \int_0 ^L\eta(y)w(y) \left( \int_0^L k_y(x,y)\left[ \int_0^L k(x,z)\eta(z)dz   + \int_0^L s(x,z)w(z)dz\right]dx\right)dy \\
\leq & \|\eta(t)\|_{L^2(0,L)} \int_0^L |\eta(y)||w(y)|\|k_y(y)\|_{L^2(0,L)}dy \\
& +\sup_{y \in (0,L)} \|k_y(y)\|_{L^2(0,L)} \|k\|_{L^2([0,L]\times[0,L])} \|\eta(t)\|_{L^2(0,L)}^2\|w(t)\|_{L^2(0,L)}\\
&+\sup_{y \in (0,L)} \|k_y(y)\|_{L^2(0,L)} \|s\|_{L^2([0,L]\times[0,L])} \|\eta(t)\|_{L^2(0,L)}\|w(t)\|_{L^2(0,L)}^2.
\end{align*}
Thus, 
\begin{align}\label{k2}
\int_0^L \Psi_2(x,t) u(x,t)dx \leq K_2\|(\eta(t),w(t))\|^3_{X_0},
\end{align}
where 
\begin{equation*}
K_2 =  \sup_{y \in (0,L)} \|k_y(y)\|_{L^2(0,L)}\left( 1+\|k\|_{L^2([0,L]\times[0,L])}+\|s\|_{L^2([0,L]\times[0,L])}\right).
\end{equation*}
Finally, 
\begin{align*}
\int_0^L &\Psi_3(x) u(x)dx= - \frac12\int_0^L \left( \int_0^L s_y(x,y)w^2(y)dy\right)\eta(x)dx  \\
&+ \frac12\int_0 ^L \left( \int_0^L s_y(x,y)w^2(y)dy\right)\left[ \int_0^L k(x,z)\eta(z)dz + \int_0^L s(x,z)w(z)dz\right]dx \\
=& - \frac12 \int_0^L w^2(y)\left( \int_0^L s_y(x,y)\eta(x)dx\right)dy  \\
&+ \frac12 \int_0 ^Lw^2(y) \left( \int_0^L s_y(x,y)\left[ \int_0^L k(x,z)\eta(z)dz   + \int_0^L s(x,z)w(z)dz\right]dx\right)dy \\
\leq & \frac12\|\eta(t)\|_{L^2(0,L)} \int_0^L w^2(y)\|s_y(y)\|_{L^2(0,L)}dy\\
&+ \frac12\|\eta(t)\|_{L^2(0,L)}\int_0 ^Lw^2(y) \left( \int_0^L |s_y(x,y)|\|k(x)\|_{L^2(0,L)}dx\right) dy \\
&+\frac12\sup_{y \in (0,L)} \|s_y(y)\|_{L^2(0,L)} \|k\|_{L^2([0,L]\times[0,L])} \|\eta(t)\|_{L^2(0,L)}\|w(t)\|_{L^2(0,L)}^2 \\
&+\frac12\sup_{y \in (0,L)} \|s_y(y)\|_{L^2(0,L)} \|s\|_{L^2([0,L]\times[0,L])} \|w(t)\|_{L^2(0,L)}^3.
\end{align*}
Then, 
\begin{align}\label{k3}
\int_0^L \Psi_3(x,t) u(x,t)dx \leq K_3\|(\eta(t),w(t))\|^3_{X_0},
\end{align}
where  
\begin{equation*}
K_3 =  \frac12\sup_{y \in (0,L)} \|s_y(y)\|_{L^2(0,L)}\left( 1+\|k\|_{L^2([0,L]\times[0,L])}+\|s\|_{L^2([0,L]\times[0,L])}\right).
\end{equation*}

Similarly, we can estimate the others three remaining terms on the right hand side of  \eqref{eq16}:
\begin{align}\label{eq21'}
\sum_{i=1}^3 \int_0^L \Phi_i (x,t)v(x,t)dx \leq  \left(\frac12 K_1 +\frac12K_2+ 2K_3\right)\|(\eta(t),w(t))\|^3_{X_0}.
\end{align}
Thus, by using \eqref{eq17}, \eqref{k2}, \eqref{k3} and \eqref{eq21'} in \eqref{eq16}, there exists a constant $\overline{K}$ such that
\begin{equation}\label{*}
\frac12\frac{d}{dt}\|(u(t),v(t))\|_{X_0}^2 \leq -\lambda \|(u(t),v(t))\|_{X_0}^2  
 +  \overline{K} \|(\eta(t),w(t))\|_{\overline{X}_2} \|(\eta(t),w(t))\|_{X_0}^2.
\end{equation}
On the other hand, note that \eqref{trans} can be rewrite as
\begin{align*}
u&=(I-(K+S))\eta+S(\eta-w) \\
v_1&=(I-(K+S))w-S(\eta-w),
\end{align*}
then, $u+v=(I-(K+S))(\eta+w)$ and $u-v=(I-(K-S))(\eta-w)$. Furthermore, due to Remark \ref{remarkinverse}, $(I-(K+S))^{-1}$ and $(I-(K-S))^{-1}$ belong to $\mathcal{L}(L^2(0,L))$. Thus, we can get that 
\begin{align*}
\|\eta(t)\|^2_{L^2(0,L)}&\leq \frac12\left\lbrace \|\eta(t)+w(t)\|^2_{L^2(0,L)}+\|\eta(t)-w(t)\|^2_{L^2(0,L)}\right\rbrace \\
&\leq C_1  \|(u(t),v(t))\|^2_{X_0},
\end{align*}
where $C_1$ is a positive constant given by 
\begin{align*}
C_1 = \frac{1}{2}\left\lbrace\|(I-(K+S))^{-1}\|^2_{\LL(L^2(0,L))} + \|(I-(K-S))^{-1}\|^2_{\LL(L^2(0,L))} \right\rbrace.
\end{align*}
Analogously, we obtain 
\begin{equation*}
\|w(t)\|^2_{L^2(0,L)}\leq C_1\|(u(t),v(t))\|^2_{X_0}.
\end{equation*}
Hence, there exists $C_1=C_1(K,S)>0$ satisfying  
\begin{equation}\label{eq22}
\|(\eta(t),w(t))\|^2_{X_0}\leq C_1  \|(u(t),v(t))\|^2_{X_0}.
\end{equation}
By \eqref{*}-\eqref{eq22}, it follows that 
\begin{equation*}
\frac12\frac{d}{dt}\|(u(t),v(t))\|_{X_0}^2 \leq -\left(\lambda- \overline{K}C_1 \|(\eta(t),w(t))\|_{\overline{X}_2} \right) \|(u(t),v(t))\|_{X_0}^2. 
\end{equation*}
For a given $\lambda > 0$, we know that there is  $\delta_1 > 0$ such that, if $\|(\eta(0),w(0))\|_{\overline{X}_2} <\delta_1$, we have
\begin{align*}
\overline{K}C_1 \|(\eta(t),w(t))\|_{\overline{X}_2}  <\frac{\lambda}{2}, \quad \forall t \in [0,T].
\end{align*}
Thus, we have  
\begin{equation*}
\frac{d}{dt}\|(u(t),v(t))\|_{X_0}^2 \leq -\lambda \|(u(t),v(t))\|_{X_0}^2, \quad \forall t \in [0,T],
\end{equation*}
which implies that 
\begin{equation*}
\|(u(t),v(t))\|_{X_0} \leq e^{-\frac{\lambda}{2} t} \|(u(0),v(0))\|_{X_0}, \quad \forall t \in [0,T].
\end{equation*}
Consequently, using the expression of $u$ and $v$, defined by \eqref{trans}, in \eqref{eq22}, we get that
\begin{equation*}
\|(\eta(t),w(t))\|_{X_0} \leq C e^{-\frac{\lambda}{2} t} \|(\eta_0,w_0)\|_{X_0}, \quad \forall t \in [0,T],
\end{equation*}
for some positive constant $C=C(K,S)$. Therefore, the proof of the theorem is finished.
\end{proof}

\section{Further comments and open problems}\label{FC}



\vspace{0.3cm}
\noindent$\bullet$\textit{ \textbf{One control on the right end-point}}
\vspace{0.2cm}

If we consider homogeneous Dirichlet condition and one control inputs at the Neumann boundary condition, then we are not able to prove the rapid stabilization via the  backstepping.  For instance, if we take $g=0$, the boundary condition \eqref{b1.1} becomes 
\begin{equation}\label{bbb1.1}
\begin{cases}
\eta(0,t)=0,\,\,\eta(L,t)=0,\,\,\eta_{x}(0,t)=f(t),&t \in (0,\infty), \\
w(0,t)=0,\,\,w(L,t)=0,\,\,w_{x}(L,t)=0,& t \in (0,\infty).
\end{cases}
\end{equation}
As we did before, a natural idea is to use the transformation
\begin{equation}\label{eq30}
\begin{cases}
u(x,t) = \eta(x,t)- \displaystyle\int^L_0k(x,y)\eta(y,t)dt-\int_0^Ls(x,y) w(y,t)dy \\
 v(x,t) = w(x,t)-\displaystyle\int^L_0k(x,y)w(y,t)dy - \int^L_0s(x,y)\eta(y,t)dy,
\end{cases}
\end{equation}
where $(k(\cdot,\cdot), s(\cdot,\cdot))$ is a solution of an appropriate  stationary system. However, it is not clear if that approach, used in this paper, works in this case. Indeed, \eqref{eq30} implies that the feedback law will be given by 
\begin{equation}\label{feedback2}
f(t):=F(\eta,w)=\displaystyle\int^L_0k_x(0,y)\eta(y,t)dt+\int_0^Ls_x(0,y) w(y,t),
\end{equation}
where $(k(\cdot,\cdot), s(\cdot,\cdot))$ should solves the stationary system \eqref{esta1} with boundary condition \eqref{esta1.1} and 
a additional restriction $$k_x(L,\cdot)=s_x(L,\cdot)=0.$$ As in Theorem \ref{mod_coron}, is not difficult to see that the well-posedness of the above stationary problem is equivalent to the well-posedness of the following problem
\begin{equation}\label{eq31}
\begin{cases}
\overline{k}_{yyy}+\overline{k}_y+\overline{k}_{xxx}+\overline{k}_x \pm\lambda \overline{k}= \pm\lambda \delta (x-y) & \text{in $(0,L)\times(0,L)$}, \\
\overline{k}(x,0)=\overline{k}(x,L)=0, & \text{on $(0,L)$}, \\
\overline{k}_y(x,0)=\overline{k}_y(x,L)=0, & \text{on $(0,L)$}, \\
\overline{k}(0,y)=\overline{k}(L,y)=0, & \text{on $(0,L)$}, \\
\overline{k}_x(L,y)=0, & \text{on $(0,L)$},
\end{cases}
\end{equation}
for any $\lambda \in \R \setminus \{0\}$. However, with these boundary restrictions the third order system \eqref{eq31} becomes over-determined. Therefore,  is not clear if such function $\overline{k}(\cdot,\cdot)$ exists. Thus, the natural open problem appears:

\vspace{0.2cm}
\noindent\textit{{\bf Question $\mathcal{A}$:}
Can we prove that the nonlinear system \eqref{n1'''}-\eqref{bbb1.1}, with $f(t)$ defined by \eqref{feedback2}, is exponential stable, by using the backstepping method?}

\vspace{0.4cm}
\noindent$\bullet$\textit{ \textbf{Less regularity of the initial data}}
\vspace{0.2cm}

Consider the following linear KdV-KdV system
\begin{equation}
\label{lin_f}
\begin{cases}
\eta_t + w_x+w_{xxx}= 0, & \text{in} \,\, (0,L)\times (0,+\infty),\\
w_t +\eta_x +\eta_{xxx}=0,  & \text{in} \,\, (0,L)\times (0,+\infty), \\
\eta(x,0)= \eta_0(x), \quad w(x,0)=  w_0(x), & \text{in} \,\, (0,L),
\end{cases}
\end{equation}
with following boundary conditions
\begin{equation}\label{lin_f1}
\begin{cases}
\eta(0,t)=0,\,\,\eta(L,t)=0,\,\,\eta_{x}(0,t)=f(t),& \text{on } (0,T), \\
w(0,t)=0,\,\,w(L,t)=0,\,\,w_{x}(L,t)=g(t).& \text{on } (0,T),
\end{cases}
\end{equation}
where $f(t):=F_1(\eta(t),w(t))$ and $g(t):= F_2(\eta(t),w(t))$, with $F_i$ in $\LL(X_0;\mathbb{R})$, $i=1,2$,  defined by \eqref{feedback1}. 

When we required less regularity of the initial data, the following result of the well-posedness for the system \eqref{lin_f}-\eqref{lin_f1} is true:

\vspace{0.1cm} 
\noindent\textit{For every $(\eta_0,w_0)\in X_{-1}$ and $(f,g)\in [L^2(0,T)]^2$, there exists a unique solution $$(\eta, w)\in C([0,T];X_{-1})$$ of system \eqref{lin_f}-\eqref{lin_f1}, such that 
\begin{align*}
\|(\eta,w)\|_{L^{\infty}(0,T;X_{-1})} \leq C \left( \|(\eta_0,w_0)\|_{X_{-1}} + \|(f,g)\|_{[L^2(0,T)]^2}\right),
\end{align*}
for some positive constant $C=C(T)$.}

\vspace{0.1cm} 
To prove it, we use the classical approach given by the Riesz representation Theorem to obtain a solution by transposition, see \cite{capis_thesis,capistrano2016} for more details.  Our analysis on the case of regular data (Theorem \ref{main_int}) suggests that is possible to obtain the rapid exponential stabilization for the linear system \eqref{lin_f} for less regularity of the initial data whenever the linear system is well-posedness in some sense on $X_0$.  Thus, another natural question arises here is the following one:

\vspace{0.2cm}
\noindent\textit{{\bf Question $\mathcal{B}$:}
Is the linear system \eqref{lin_f}-\eqref{lin_f1}, with $f(t):=F_1(\eta(t),w(t))$ and $g(t):= F_2(\eta(t),w(t))$ defined by \eqref{feedback1}, well-posedness in $X_0$ for less regular initial data $(\eta_0,w_0)$?}

\vspace{0.1cm}
{Due to a lack of a priori $X_0$--estimate, the issue of the rapid stabilization for the nonlinear problem is difficult to address.  Only when the initial data is regular we can get a positive answer to this question, as was proved in Theorem \ref{main_int}. Indeed, in order to obtain a appropriate  bound for \eqref{eq17},  we have to estimate the term $-\frac12 \int_0^L \eta^2(x) w_{x}(x) dx$. This explain why  the estimation in the $\overline{X}_2$-norm it is necessary in our approach.  Therefore, for initial data in $X_0$, the following question remains open:

\vspace{0.2cm}
\noindent\textit{{\bf Question $\mathcal{C}$:}
Is the nonlinear system \eqref{n1'''}-\eqref{lin_f1}, with $f(t):=F_1(\eta(t),w(t))$ and $g(t):= F_2(\eta(t),w(t))$ defined by \eqref{feedback1}, exponential stable for initial data $(\eta_0,w_0)\in X_0$?}

\vspace{0.4cm}
\noindent$\bullet$\textit{ \textbf{Global well-posedness in time}}
\vspace{0.2cm}

\vspace{0.2cm}

Adapting the proof of Theorem \ref{nonlinearexitence1}, one can also prove that, without any restriction over the initial data $(\eta_0,w_0)$, there exist $T^* > 0$ and a solution $(\eta,w)$ of \eqref{n1'''}-\eqref{n1.1''}, with the feedback law $f(t) = F_1(\eta(\cdot,t),w(\cdot,t))$ and $g(t)= F_1(\eta(\cdot,t),w(\cdot,t))$ satisfying the initial condition $\eta(\cdot,0) = u_0(\cdot)$ and $w(\cdot,0)=w_0(\cdot)$. More precisely,

\begin{thm}
Let $F_i:X_0\longrightarrow \R$ be a continuous linear map for $i=1,2$ and the initial data $(\eta_0,w_0) \in \overline{X}_2$.  Then, there exists $T^*>0$ such that, there exists a unique solution $(\eta, w) \in C([0,T^*]; \overline{X}_2)$ of \eqref{n1'''}-\eqref{n1.1''} with $f(t):=F_1(\eta(t),w(t))$, $g(t):=F_2(\eta(t),w(t))$. Moreover
\begin{align*}
\|(\eta,w)\|_{C([0,T]; \overline{X}_2)} \leq C \|(\eta_0,w_0)\|_{ \overline{X}_2},
\end{align*}
for some positive constant $C=C(T^*)$.
\end{thm}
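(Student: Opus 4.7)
The plan is to revisit the contraction argument used in the proof of Theorem \ref{nonlinearexitence1}, but to trade smallness of the initial data for smallness of the time horizon. In Theorem \ref{nonlinearexitence1} the time $T$ was fixed and $\rho$ was chosen small enough so that the quadratic remainder $3T^{1/2}C_TC_1 R^2$ could be absorbed; here the initial data $(\eta_0,w_0)\in\overline{X}_2$ is arbitrary (hence the radius $R$ is forced), so instead I would select $T^{*}>0$ small enough depending on $\|(\eta_0,w_0)\|_{\overline{X}_2}$.

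First, following the same scheme, I would set
\[
R:=2C_{1}\|(\eta_0,w_0)\|_{\overline{X}_2},
\]
where $C_1$ is the constant coming from Lemma \ref{exitencefeedback1} (applied to the non-homogeneous feedback linear system on $[0,1]$, so that $C_1$ is a fixed number independent of the final time). For $0<\tau\le 1$, introduce the closed ball
\[
B_R^{\tau}(0):=\{(\eta,w)\in C([0,\tau];\overline{X}_2):\|(\eta,w)\|_{C([0,\tau];\overline{X}_2)}\le R\},
\]
and consider the map $\Gamma$ of the proof of Theorem \ref{nonlinearexitence1}, which sends $(\eta,w)\in C([0,\tau];\overline{X}_2)$ to the unique $(\overline{\eta},\overline{w})$ solving the linear feedback system associated with the sources $h_1=-(\eta w)_x$ and $h_2=-ww_x$, and boundary data $f=F_1(\overline{\eta},\overline{w})$, $g=F_2(\overline{\eta},\overline{w})$. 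Lemma \ref{exitencefeedback1} ensures that $\Gamma$ is well defined on $C([0,\tau];\overline{X}_2)$, and the bilinear estimate \eqref{eqn11} applied on $[0,\tau]$ delivers
\[
\|\Gamma(\eta,w)\|_{C([0,\tau];\overline{X}_2)}\le C_1\|(\eta_0,w_0)\|_{\overline{X}_2}+3\tau^{1/2}C_1\widetilde{C}\|(\eta,w)\|_{C([0,\tau];\overline{X}_2)}^{2}
\]
for some $\widetilde{C}>0$ independent of $\tau\in(0,1]$.

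Next, I would pick $T^{*}\in(0,1]$ so small that $12\,(T^{*})^{1/2}C_1\widetilde{C}\,R\le 1$. With this choice of $T^{*}$, the quadratic remainder satisfies $3(T^{*})^{1/2}C_1\widetilde{C}R^{2}\le R/2$, and hence $\Gamma$ maps $B_R^{T^{*}}(0)$ into itself. A completely analogous computation, exploiting the bilinear structure of $((\eta w)_x,ww_x)$, shows that on $B_R^{T^{*}}(0)$,
\[
\|\Gamma(\eta_1,w_1)-\Gamma(\eta_2,w_2)\|_{C([0,T^{*}];\overline{X}_2)}\le 12(T^{*})^{1/2}C_1\widetilde{C}R\,\|(\eta_1-\eta_2,w_1-w_2)\|_{C([0,T^{*}];\overline{X}_2)}\le \tfrac{1}{2}\|(\eta_1-\eta_2,w_1-w_2)\|_{C([0,T^{*}];\overline{X}_2)},
\]
so that $\Gamma$ is a $\tfrac12$--contraction of $B_R^{T^{*}}(0)$. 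Banach's fixed point theorem then gives a unique fixed point $(\eta,w)\in B_R^{T^{*}}(0)$, which is precisely the solution of \eqref{n1'''}--\eqref{n1.1''} with $f=F_1(\eta,w)$, $g=F_2(\eta,w)$, and automatically satisfies
\[
\|(\eta,w)\|_{C([0,T^{*}];\overline{X}_2)}\le R=2C_1\|(\eta_0,w_0)\|_{\overline{X}_2}.
\]

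The only subtlety\,---\,and what I view as the main technical point\,---\,is ensuring that the constant $C_1$ coming from Lemma \ref{exitencefeedback1} can be taken uniform for $\tau\in(0,1]$; this is standard because the constants in Lemma \ref{existentransposition1} and Lemma \ref{exitencefeedback1} are monotone non-decreasing in $T$, so fixing any reference time (say $T=1$) gives a uniform bound on $[0,T^{*}]$ for every $T^{*}\le 1$. Uniqueness of the solution on $[0,T^{*}]$ beyond the ball $B_R^{T^{*}}(0)$ follows from a standard Gronwall argument applied to the difference of two solutions, using again the bilinear estimate \eqref{eqn11} together with Lemma \ref{exitencefeedback1}. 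Note that since $T^{*}$ must be chosen as a decreasing function of $\|(\eta_0,w_0)\|_{\overline{X}_2}$, the argument is genuinely local in time; extending it to a global statement would require an a priori $\overline{X}_2$--estimate, which is precisely the obstruction discussed in Section \ref{FC}.
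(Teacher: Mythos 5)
Your proposal is correct and is precisely the adaptation the paper has in mind: the paper gives no written proof of this theorem, merely asserting that it follows by ``adapting the proof of Theorem \ref{nonlinearexitence1}'', and your argument---fixing the radius $R$ by the size of the initial data and shrinking $T^{*}$ until the quadratic term is absorbed and $\Gamma$ contracts, with the uniformity of the constants for $\tau\in(0,1]$ noted explicitly---is exactly that adaptation. The only blemish is a harmless arithmetic slip: your condition $12(T^{*})^{1/2}C_1\widetilde{C}R\le 1$ yields a Lipschitz constant $\le 1$ rather than $\le\tfrac12$, so the threshold should be $\le\tfrac12$ (or $24$ in place of $12$), which does not affect the argument.
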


Observe that if $(\eta_1, w_1)\in C([0,T_1],\overline{X}_2)$ and $(\eta_2, w_2)\in C([0,T_2],\overline{X}_2)$  are  the solutions given by the Theorem \ref{nonlinearexitence1} with initial data $(\eta_0, w_0)$ and $(\eta_1(T_1), w_1(T_1))$, respectively, the function $(\eta,w):[0,T_1+T_2]\rightarrow \overline{X}_2$ defined by 
\begin{align*}
(\eta(t),w(t))=\begin{cases}
(\eta_1(t), w_1(t))  & \text{if $t \in [0,T_1]$}, \\
(\eta_2(t-T_1), w_2(t-T_2))  & \text{if $t \in [T_1,T_1+T_2]$}, 
\end{cases}
\end{align*}
is the solution of the feedback system on interval $[0,T_1+T_2]$ with initial data $(\eta_0, w_0)$. This argument allows us extend a local solution until a maximal interval, that is, for all $0<T<T_{\max} \leq \infty$ there exist a function $(\eta, w)\in C([0,T],\overline{X}_2)$ solution of the feedback system \eqref{n1'''}-\eqref{n1.1''}. The following proposition, easily holds:
\begin{prop}
Let $(\eta_0, w_0) \in \overline{X}_2$ and $(\eta, w)\in C([0,T],\overline{X}_2)$ solution of the feedback system, for all $0<T<T_{\max}$, with initial data $(\eta_0, w_0)$. Then, only one of the following assertions hold:
\begin{enumerate}
\item[(i)] $T_{\max}=\infty$;
\item[(ii)] If $T_{\max}<\infty$, then, $\lim_{t\rightarrow T_{\max}}\|(\eta(t),w(t))\|_{\overline{X}_2}=\infty$. 
\end{enumerate}
\end{prop}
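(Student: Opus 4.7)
The plan is to establish a standard blow-up alternative by showing that the local existence time provided by Theorem~\ref{nonlinearexitence1} can be made uniform on bounded sets of initial data in $\overline{X}_2$, and then running the usual continuation-by-contradiction argument.

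First, I would revisit the contraction argument used in the proof of Theorem~\ref{nonlinearexitence1}. The controlling smallness condition there reads $24\,T^{1/2}C_T^{2}C_1\rho<1$, where $\rho$ is any upper bound on $\|(\eta_0,w_0)\|_{\overline{X}_2}$. Inverting this inequality in $T$, and observing that the auxiliary constants $C_T$, $C_1$ can be bounded uniformly for $T$ ranging in a fixed bounded interval (say $[0,1]$), shows that for every $M>0$ there exists $\tau=\tau(M)>0$ such that any initial datum in $\overline{X}_2$ of norm at most $M$ produces a unique solution of the closed-loop system on $[0,\tau]$. In short, the local existence time depends only on an a priori bound for the $\overline{X}_2$-norm of the initial data.

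Next, define $T_{\max}:=\sup\{T>0:\text{a solution of the feedback system exists on }[0,T]\}$. Uniqueness in Theorem~\ref{nonlinearexitence1} ensures that all such local solutions agree on the intersection of their domains, so there is a well-defined maximal solution $(\eta,w)\in C([0,T_{\max}),\overline{X}_2)$. The dichotomy $T_{\max}=\infty$ vs. $T_{\max}<\infty$ is then tautological, so the only real content is the blow-up in part (ii). Assume $T_{\max}<\infty$ and, for contradiction, suppose there exist $M>0$ and a sequence $t_n\nearrow T_{\max}$ with $\|(\eta(t_n),w(t_n))\|_{\overline{X}_2}\le M$. Applying the uniform local result to the initial datum $(\eta(t_n),w(t_n))$ produces, after translating time by $t_n$, a solution of the feedback system on $[t_n,t_n+\tau(M)]$, which by uniqueness coincides with $(\eta,w)$ wherever both are defined. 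Choosing $n$ so large that $t_n+\tau(M)>T_{\max}$ extends $(\eta,w)$ past $T_{\max}$, contradicting the definition of the maximal time. Hence no such bounded sequence exists, which is precisely the statement $\|(\eta(t),w(t))\|_{\overline{X}_2}\to\infty$ as $t\to T_{\max}^{-}$.

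The main (and essentially only) obstacle is justifying the uniform-in-$M$ local existence time $\tau(M)$: one has to re-examine the constants appearing in the fixed-point contraction estimate in Theorem~\ref{nonlinearexitence1} and verify that they can be controlled independently of the chosen time horizon, so that $\tau(M)$ is indeed a strictly positive function of $M$ alone. Once this uniform statement is in hand, the remainder of the argument is the classical continuation routine and involves no new difficulty.
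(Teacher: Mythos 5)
Your proposal is correct and follows essentially the same route the paper takes: the concatenation-of-solutions argument sketched just before the proposition, upgraded to the standard blow-up alternative. You rightly identify the one point the paper leaves implicit --- that the contraction time in Theorem~\ref{nonlinearexitence1} (governed by $24\,T^{1/2}C_T^{2}C_1\rho<1$) depends only on a bound for $\|(\eta_0,w_0)\|_{\overline{X}_2}$, so the local existence time is uniform on bounded sets --- and your verification of this is accurate.
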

Thus, the following questions related to the global well-posedness in time are also important:

\vspace{0.2cm}
\noindent\textit{{\bf Question $\mathcal{D}$:}
Let $F_i:X_0\longrightarrow \R$ be a continuous linear map for $i=1,2$. Is the nonlinear system \eqref{n1'''}-\eqref{n1.1''}, with $f(t):=F_1(\eta(t),w(t))$ and $g(t):= F_2(\eta(t),w(t))$, global well-posedness in time, i.e, $T_{\max}$ is infinity?}

\vspace{0.2cm}
\noindent\textit{{\bf Question $\mathcal{E}$:}
If the question $\mathcal{D}$ has positive answer, do have expect some restriction on the initial data?}

\vspace{0.4cm}
\noindent\textbf{Acknowledgments:} This work was carried out during the visit of the second author to the Federal University of Pernambuco. F. A. Gallego would like to thank the Mathematics Department at Federal University of Pernambuco, in Recife, for its hospitality.

\end{document}